\title{Strongly Liftable Schemes and the Kawamata-Viehweg 
Vanishing in Positive Characteristic
\footnote{This paper was partially supported by the National Natural Science 
Foundation of China (Grant No.\ 10901037) and Ph.D.\ Programs Foundation of 
Ministry of Education of China (Grant No.\ 20090071120004).}}
\author{Qihong Xie}
\date{}
\theoremstyle{plain}
\newtheorem{prop}{Proposition}[section]
\newtheorem{lem}[prop]{Lemma}
\newtheorem{thm}[prop]{Theorem}
\newtheorem{cor}[prop]{Corollary}
\theoremstyle{definition}
\newtheorem{defn}[prop]{Definition}
\newtheorem*{ack}{Acknowledgments}
\newtheorem*{nota}{Notation}
\theoremstyle{remark}
\newtheorem{rem}[prop]{Remark}
\newcommand{\Q}{\mathbb Q}
\newcommand{\Z}{\mathbb Z}
\newcommand{\F}{\mathbb F}
\newcommand{\A}{\mathbb A}
\newcommand{\PP}{\mathbb P}
\newcommand{\OO}{\mathcal O}
\newcommand{\II}{\mathcal I}
\newcommand{\EE}{\mathcal E}
\newcommand{\MM}{\mathcal M}
\newcommand{\HH}{\mathcal H}
\newcommand{\LL}{\mathcal L}
\newcommand{\GG}{\mathcal G}
\newcommand{\TT}{\mathcal T}
\newcommand{\Pic}{\mathop{\rm Pic}\nolimits}
\newcommand{\Supp}{\mathop{\rm Supp}\nolimits}
\newcommand{\Exc}{\mathop{\rm Exc}\nolimits}
\newcommand{\ch}{\mathop{\rm char}\nolimits}
\newcommand{\Hom}{\mathop{\rm Hom}\nolimits}
\newcommand{\Spec}{\mathop{\rm Spec}\nolimits}
\newcommand{\Proj}{\mathop{\bf Proj}\nolimits}
\newcommand{\proj}{\mathop{\rm Proj}\nolimits}
\newcommand{\ext}{\mathop{\rm Ext}\nolimits}
\newcommand{\divisor}{\mathop{\rm div}\nolimits}
\newcommand{\ra}{\rightarrow}
\newcommand{\wt}{\widetilde}
\begin{document}
\maketitle

\begin{abstract}
A smooth scheme $X$ over a field $k$ of positive characteristic 
is said to be strongly liftable, if $X$ and all prime divisors 
on $X$ can be lifted simultaneously over $W_2(k)$. 
In this paper, we give some concrete examples and properties of 
strongly liftable schemes. As an application, we prove that the 
Kawamata-Viehweg vanishing theorem in positive characteristic 
holds on any normal projective surface which is birational to 
a strongly liftable surface. 
\end{abstract}

\setcounter{section}{0}
\section{Introduction}\label{S1}

As is well known, the Kawamata-Viehweg vanishing theorem plays a crucial 
role in birational geometry of algebraic varieties, and it is of several 
forms, where the most general form is stated for log pairs which have 
only Kawamata log terminal singularities \cite[Theorem 1-2-5]{kmm}.

\begin{thm}[Kawamata-Viehweg vanishing]\label{1.1}
Let $X$ be a normal projective variety over an algebraically closed 
field $k$ with $\ch(k)=0$, $B=\sum b_iB_i$ an effective $\Q$-divisor 
on $X$, and $D$ a $\Q$-Cartier Weil divisor on $X$. 
Assume that $(X,B)$ is Kawamata log terminal (KLT for short), 
and $D-(K_X+B)$ is ample. Then $H^i(X,D)=0$ holds for any $i>0$.
\end{thm}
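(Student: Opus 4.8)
Since this is the classical characteristic-zero statement, the natural plan is to reduce it to Kodaira vanishing on a smooth model by means of a log resolution, letting the KLT hypothesis control the discrepancies. First I would invoke Hironaka's resolution of singularities (available because $\ch(k)=0$) to choose a log resolution $f\colon Y\ra X$ of the pair $(X,B)$ such that $Y$ is smooth projective and $\Exc(f)\cup\Supp(f_*^{-1}B)$ is a simple normal crossing divisor; after finitely many further blow-ups I may also assume that the fractional part of $f^*A$ has simple normal crossing support, where $A:=D-(K_X+B)$ is the ample $\Q$-divisor of the hypothesis. Writing
\[
K_Y=f^*(K_X+B)+\sum_i a_iE_i,
\]
the KLT assumption is precisely the statement that $a_i>-1$ for every prime divisor $E_i$ on $Y$.

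The key observation is that the integral divisor $D_Y:=K_Y+\lceil f^*A\rceil$ satisfies $K_Y+f^*A=f^*D+\sum_i a_iE_i$, so that $D_Y=\lceil f^*D+\sum_i a_iE_i\rceil$. Since $a_i>-1$ forces $\lceil a_i\rceil\ge 0$ on the exceptional locus and $\lceil -b_j\rceil=0$ on the strict transforms of the components of $B$ (here $0\le b_j<1$ by KLT), the rounding only adds an effective $f$-exceptional divisor, whence $f_*\OO_Y(D_Y)=\OO_X(D)$; this is the only place the KLT hypothesis enters. Together with the relative vanishing $R^jf_*\OO_Y(D_Y)=0$ for $j>0$ (the local form of the vanishing we are proving, applied to $f$), the Leray spectral sequence yields $H^i(X,D)\cong H^i(Y,D_Y)$ for all $i$. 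It therefore suffices to prove, on the smooth projective $Y$, that $H^i(Y,K_Y+\lceil N\rceil)=0$ for $i>0$ whenever $N$ is a nef and big $\Q$-divisor with simple normal crossing fractional part; here $N=f^*A$, which is nef and big because $A$ is ample and $f$ is birational.

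For this smooth Kawamata–Viehweg statement I would argue in three moves. By Kodaira's lemma, $N$ being nef and big lets me write $N\equiv A'+E'$ with $A'$ an ample $\Q$-divisor and $E'$ effective; replacing $N$ by $N-\varepsilon E'=(1-\varepsilon)N+\varepsilon A'$ for small rational $\varepsilon>0$ leaves $\lceil N\rceil$ unchanged and renders $N$ ample, so I reduce to the case where $N$ is an ample $\Q$-divisor with simple normal crossing fractional part. Next, clearing the denominators of the fractional part by an integer $m$, I construct a cyclic cover $\pi\colon Z\ra Y$ ramified along the components of that fractional part in the style of Esnault–Viehweg, so that $\pi^*N$ becomes integral and $K_Y+\lceil N\rceil$ occurs as a direct summand of $\pi_*\OO_Z(K_Z+H)$ for a suitable ample integral divisor $H$ on the smooth cover $Z$. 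Finally, Kodaira vanishing gives $H^i(Z,K_Z+H)=0$ for $i>0$, and since $H^i(Y,K_Y+\lceil N\rceil)$ is a direct summand of this group it vanishes as well.

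The genuinely non-formal ingredient, and the step I expect to be the main obstacle, is Kodaira vanishing itself: all of the preceding reductions are formal once resolution of singularities and the relative vanishing are granted, but $H^i(Z,K_Z+H)=0$ for ample $H$ requires real characteristic-zero input. I would supply it either analytically, via the Akizuki–Nakano–Kodaira identities from Hodge theory, or algebraically, via the Deligne–Illusie proof that lifts $Z$ to $W_2(k)$ and splits the de Rham complex using the Frobenius. The latter is the precise point of contact with the liftability circle of ideas pursued in this paper, and it also explains why Kodaira vanishing, and hence the whole theorem, can fail once one leaves characteristic zero and liftability over $W_2(k)$ is no longer automatic.
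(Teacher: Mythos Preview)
The paper does not give its own proof of Theorem~\ref{1.1}: it is stated as background and attributed to \cite[Theorem 1-2-5]{kmm}, so there is nothing to compare your argument against here. Your outline is the standard reduction (log resolution, round-up using the KLT bound on discrepancies, smooth Kawamata--Viehweg via cyclic covers and Kodaira) and is correct in spirit.

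One point deserves care. You invoke $R^jf_*\OO_Y(D_Y)=0$ for $j>0$ as ``the local form of the vanishing we are proving, applied to $f$''. As written this is circular: you are in the middle of establishing exactly that vanishing. The honest route is to first prove the smooth absolute statement $H^i(Y,K_Y+\lceil N\rceil)=0$ (your last two paragraphs), and only then deduce the relative vanishing for the birational $f$ from it by the usual device of twisting by $f^*(\text{very ample})$ and using Serre vanishing on $X$; alternatively one quotes the relative version separately (Grauert--Riemenschneider type), but either way it is an input, not a consequence of the theorem you are proving. A second, minor imprecision: in the nef-and-big to ample step, replacing $N$ by $N-\varepsilon E'$ only preserves $\lceil N\rceil$ if you have arranged in advance that $\Supp E'$ lies in your simple normal crossing locus and that no coefficient of $N$ is an integer along $\Supp E'$; this is exactly why one blows up further at the start, but you should say so.
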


In what follows, we always work over {\it an algebraically closed 
field $k$ of characteristic $p>0$} unless otherwise stated. 
The Kawamata-Viehweg vanishing theorem for smooth projective varieties 
in positive characteristic has first been proved by Hara \cite{hara} 
under the lifting condition over $W_2(k)$ of certain log pairs.

\begin{thm}[Kawamata-Viehweg vanishing in char.\ $p>0$]\label{1.2}
Let $X$ be a smooth projective variety over $k$ of dimension $d$, 
$H$ an ample $\Q$-divisor on $X$, and $D$ a simple normal crossing 
divisor containing $\Supp(\langle H\rangle)$. 
Assume that $(X,D)$ admits a lifting over $W_2(k)$. Then 
\[ H^i(X,\Omega_X^j(\log D)(-\ulcorner H\urcorner))=0 \,\,\,\,
\hbox{holds for any} \,\,\,\, i+j<\inf(d,p). \]
In particular, $H^i(X,K_X+\ulcorner H\urcorner)=0$ holds 
for any $i>d-\inf(d,p)$.
\end{thm}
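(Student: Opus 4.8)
The plan is to adapt the positive-characteristic Hodge theory of Deligne--Illusie, in the logarithmic and twisted form of Esnault--Viehweg, the hypothesis that $(X,D)$ lifts over $W_2(k)$ being exactly the input these methods require. Write $F\colon X\to X'$ for the relative Frobenius onto the Frobenius twist $X'$ and $D'$ for the induced normal crossing divisor. The lifting of $(X,D)$ over $W_2(k)$ produces, by the logarithmic Deligne--Illusie theorem, a decomposition
\[ \tau_{<p}\,F_*\Omega_X^\bullet(\log D)\cong\bigoplus_{j<p}\Omega_{X'}^j(\log D')[-j] \]
in the derived category of $\OO_{X'}$-modules. The whole argument consists in transporting this splitting across a twist that builds in the ample $\Q$-divisor $H$, and then reading off the individual Hodge pieces after passing to hypercohomology.

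The essential difficulty is that $\ulcorner H\urcorner$ need not be ample even though $H$ is, so one cannot simply twist the splitting by $\OO_X(-\ulcorner H\urcorner)$ and invoke Serre vanishing at the end. To integralize $H$ I would pass to the tame cyclic cover $\pi\colon Y\to X$ of degree $N$ prime to $p$ attached to the fractional part $\langle H\rangle$ and a section cutting out $N\langle H\rangle$; since $D$ is simple normal crossing and $p\nmid N$, the cover $Y$ is smooth and $\wt D:=(\pi^*D)_{\mathrm{red}}$ is again simple normal crossing. Because $\pi$ ramifies to order $N$ along each component of $\langle H\rangle$, the pullback $\pi^*\langle H\rangle$ becomes integral, so $\pi^*H$ is an \emph{honest ample integral} divisor on $Y$. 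The $\mu_N$-action splits $\pi_*\Omega_Y^\bullet(\log\wt D)$ into eigen-subcomplexes, each isomorphic to a twisted logarithmic de Rham complex $\big(\Omega_X^\bullet(\log D)\otimes\OO_X(-\ulcorner iH\urcorner),\nabla_i\big)$ whose residues along the components of $D$ are the fractional parts of $iH$ and hence lie in $[0,1)$; the eigen-piece $i=1$ carries precisely the sheaf $\OO_X(-\ulcorner H\urcorner)$ that we are after.

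Next I would establish the Akizuki--Nakano-type vanishing $H^i(Y,\Omega_Y^j(\log\wt D)\otimes\mathcal A^{-1})=0$ for $i+j<\inf(\dim Y,p)=\inf(d,p)$, where $\mathcal A$ is the ample integral line bundle pulled back from $H$. Here the lift of $(X,D)$ over $W_2(k)$ lifts the line bundle and section defining the tame cover, hence lifts $(Y,\wt D)$, so the decomposition above applies to $Y$ and realizes each such $H^i$ as a direct summand of the hypercohomology of the $\mathcal A^{-p}$-twisted complex. Comparing the two filtrations shows that $\sum_{i+j=n}\dim H^i(Y,\Omega_Y^j(\log\wt D)\otimes\mathcal A^{-1})$ is bounded above by the analogous sum for $\mathcal A^{-p}$, and iterating the relative Frobenius of $Y$ bounds it by the sum for $\mathcal A^{-p^m}$; since $\mathcal A$ is genuinely ample, $\mathcal A^{p^m}$ is very ample for $m\gg0$ and Serre vanishing (through Serre duality) makes the final sum vanish for $i<d$, hence for $i+j<\inf(d,p)$. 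As $p\nmid N$ the $\mu_N$-action is semisimple, so this vanishing descends to each eigenspace; on the $i=1$ piece it yields $H^i\big(X,\Omega_X^j(\log D)(-\ulcorner H\urcorner)\big)=0$ for $i+j<\inf(d,p)$, which is the asserted statement. The ``in particular'' clause is the case $j=0$ combined with Serre duality: from $H^i(X,\OO_X(-\ulcorner H\urcorner))=0$ for $i<\inf(d,p)$ and $H^i(X,\OO_X(-\ulcorner H\urcorner))\cong H^{d-i}(X,K_X+\ulcorner H\urcorner)^\vee$, one gets $H^m(X,K_X+\ulcorner H\urcorner)=0$ for $m>d-\inf(d,p)$.

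I expect the main obstacle to be the interface between the cyclic cover and the Deligne--Illusie splitting: one must verify that $(Y,\wt D)$ genuinely lifts over $W_2(k)$, that its $\mu_N$-eigenspace decomposition is strictly compatible with the decomposition of $F_*\Omega_Y^\bullet(\log\wt D)$, and that the bookkeeping of residues in $[0,1)$ matches the $i=1$ eigen-piece with the exact integral divisor $\ulcorner H\urcorner$. This is precisely where the simple normal crossing hypothesis on $D$ and the tameness $p\nmid N$ are indispensable, and where a careless choice of cover would destroy either the smoothness of $Y$ or the ampleness of $\pi^*H$ on which the concluding Serre-vanishing step depends.
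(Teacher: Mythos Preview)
The paper does not give its own proof of this theorem: it is quoted from Hara \cite{hara} (see the sentence immediately preceding the statement) and used thereafter as a black box, e.g.\ in the proof of Theorem~\ref{4.2}. So there is no in-paper argument to compare against. Your outline is essentially the standard proof---logarithmic Deligne--Illusie decomposition, passage to a tame cyclic cover to integralize $\langle H\rangle$, Frobenius amplification plus Serre vanishing on the cover, and descent along the $\mu_N$-eigenspace splitting---which is exactly the route taken by Hara (building on \cite{di} and \cite{ev}). In that sense your proposal is correct and matches the literature the paper invokes.

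Two small points you already half-anticipate. First, to choose $N$ prime to $p$ you may need to perturb the coefficients of $H$ slightly so that the denominators of $\langle H\rangle$ avoid $p$; this is harmless because ampleness is open and $\ulcorner H\urcorner$ is unchanged under a small enough perturbation. Second, the claim that the lift of $(X,D)$ induces a lift of $(Y,\wt D)$ needs one sentence: the obstruction to lifting the line bundle $\LL$ with $\LL^N\cong\OO_X\big(\sum a_iD_i\big)$ lies in $H^2(X,\OO_X)$, and since $N\cdot\mathrm{ob}(\LL)=\mathrm{ob}(\LL^N)=0$ with $N$ invertible on the $k$-vector space $H^2(X,\OO_X)$, the bundle $\LL$ lifts; the compatible section then lifts because the relevant $H^1$ is likewise a $k$-vector space on which $N$ acts invertibly. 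Alternatively, one can avoid lifting $Y$ altogether and run the twisted Cartier argument directly on the eigen-complex over $X$, as in \cite{hara}.
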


To obtain a practical version of the Kawamata-Viehweg vanishing 
theorem in positive characteristic instead of Theorem \ref{1.2}, 
we have to take the lifting property of log pairs into account. 
For this purpose, we introduce the following notion. 
A smooth scheme $X$ is said to be strongly liftable, if $X$ and 
all prime divisors on $X$ can be lifted simultaneously over $W_2(k)$ 
(see Definition \ref{2.3} for more details).

First of all, we can give many concrete examples of strongly liftable schemes.

\begin{thm}\label{1.3}
The following schemes are strongly liftable:
\begin{itemize}
\item[(i)] $\A^n_k$, $\PP^n_k$ and a smooth projective curve;

\item[(ii)] a smooth projective variety of Picard number 1 
which is a complete intersection in $\PP^n_k$;

\item[(iii)] a smooth projective rational surface;

\item[(iv)] a smooth projective surface whose relatively minimal 
model is $\PP_C(\LL_1\oplus\LL_2)$, where $C$ is a smooth 
projective curve and $\LL_i$ are invertible sheaves on $C$.
\end{itemize}
\end{thm}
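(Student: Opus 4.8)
The plan is to fix, for each $X$, a natural smooth lifting $\wt X$ over $W_2(k)$ and then to lift prime divisors one at a time, using either of two obstruction calculi. Viewing a prime divisor $D$ as a closed subscheme, Hilbert-scheme deformation theory shows that $D$ lifts inside $\wt X$ whenever $H^1(D,N_{D/X})=0$, where $N_{D/X}=\OO_X(D)|_D$; viewing $D$ through the pair $(\OO_X(D),s_D)$ and the exact sequence $0\to\OO_X\xrightarrow{\,p\,}\OO_{\wt X}\to\OO_X\to0$, it lifts once $\OO_X(D)$ lifts (obstruction in $H^2(X,\OO_X)$) and the section $s_D$ lifts (obstruction in $H^1(X,\OO_X(D))$). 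For $\A^n_k$ and $\PP^n_k$ I would bypass both: a prime divisor is $V(f)$ for $f$ irreducible (the coordinate ring is a UFD, respectively $f$ is a homogeneous form), so lifting the coefficients of $f$ into $W_2(k)$ lifts $D$ on the obvious lifting. For a smooth projective curve $C$, the obstruction $H^2(C,T_C)$ vanishes so $C$ lifts to some $\wt C$, and a prime divisor is a closed point, lifted by smoothness of $\wt C\to\Spec W_2(k)$. This disposes of (i).

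For (ii) I would lift the defining forms of the complete intersection $X\subset\PP^n_k$ coefficientwise to obtain a smooth lifting $\wt X\subset\PP^n_{W_2(k)}$ (smoothness being open, it persists). Since $\rho(X)=1$ forces $\Pic X=\Z\cdot\OO_X(1)$, every prime divisor satisfies $D\sim mH$; projective normality of the complete intersection makes $H^0(\PP^n,\OO(m))\to H^0(X,\OO_X(m))$ surjective, so $D=X\cap V(F)$ for a degree-$m$ form $F$. Lifting $F$ coefficientwise and intersecting with $\wt X$ then lifts $D$, once more circumventing the section-lifting obstruction by working in the ambient $\PP^n$.

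Cases (iii) and (iv) I would organize around a blow-up reduction to ruled surfaces. First a stability step: if $\wt X$ is a strong lifting of $X$ and a closed point $x$ lifts to $\wt x\in\wt X(W_2(k))$, then $\mathrm{Bl}_{\wt x}\wt X$ is a strong lifting of $\mathrm{Bl}_x X$, since the exceptional divisor lifts tautologically and strict transforms of prime divisors lift as strict transforms of their lifts. Iterating, every smooth projective rational surface reduces to $\PP^2_k$ (already done) or to a Hirzebruch surface $\F_n=\PP_{\PP^1}(\OO\oplus\OO(n))$, and every surface in (iv) reduces to its relatively minimal model $\PP_C(\LL_1\oplus\LL_2)$. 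For this base case I would lift $C$ to $\wt C$, lift each $\LL_i$ to $\wt\LL_i$ (obstruction $H^2(C,\OO_C)=0$), and take $\wt X=\PP_{\wt C}(\wt\LL_1\oplus\wt\LL_2)$; the fibres lift via lifted points of $C$, and the two distinguished sections lift as $\PP(\wt\LL_i)$.

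The main obstacle is the remaining horizontal (multisection) prime divisors $D$. Here I would use $H^1(D,N_{D/X})$: by adjunction $\deg N_{D/X}-(2p_a(D)-2)=-K_X\cdot D$, so $D$ lifts whenever $-K_X\cdot D>0$. On $\F_n$ a short intersection computation shows that every irreducible horizontal curve other than the negative section meets $-K_X$ positively, while the negative section is a distinguished section; this settles all rational surfaces and hence (iii). Over a base curve of genus $\ge1$ the same computation leaves only the finitely many horizontal curves with $-K_X\cdot D\le0$, essentially the low-degree multisections clustering around a negative section, and lifting these is the crux of (iv). I would attack them by reverting to the section description and exploiting the freedom in the line-bundle lift, under which the obstruction $\delta(s_D)$ varies by the cup product $H^1(C,\OO_C)\cup s_D$, thereby reducing the lifting of $D$ to compatibly lifting the effective point-divisors on $C$ that $s_D$ determines. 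This positive-genus bookkeeping is where I expect the real effort to lie.
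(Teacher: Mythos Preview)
Your treatment of (i) and (ii) matches the paper's: lift defining polynomials coefficientwise for $\A^n_k$, $\PP^n_k$, use $H^2(C,\TT_C)=0$ and smoothness to lift curves and their points, and for the complete intersection use $\Pic X=\Z\cdot\OO_X(1)$ together with surjectivity of $H^0(\PP^n,\OO(m))\to H^0(X,\OO_X(m))$ (the paper writes out the Koszul argument you summarize as projective normality). Your blow-up stability step is the paper's Proposition~2.6, and the reduction of (iii) and (iv) to their minimal models is the same.

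Where you diverge is in the handling of the ruled-surface base case $\PP_C(\LL_1\oplus\LL_2)$. Your approach via $H^1(D,N_{D/X})$ and the inequality $\deg N_{D/X}-(2p_a(D)-2)=-K_X\cdot D$ is a genuine alternative for (iii): on $\F_n$ every irreducible curve other than the negative section satisfies $-K_X\cdot D>0$, and the negative section is one of the two distinguished sections of the split bundle, so the argument closes. This is a pleasant variant; the paper does not use the normal-bundle obstruction at all.

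For (iv), however, your proposal has a real gap. Once $g(C)\ge1$ the anticanonical inequality fails for infinitely many classes of horizontal curves, not ``finitely many'' as you suggest (for instance on $C\times\PP^1$ one has $-K_X\cdot D=2(a-b)$ for $D\equiv aF_1+bF_2$, and irreducible curves with $b\ge a$ are plentiful). Your fallback---varying the lift of the line bundle and tracking how $\delta(s_D)$ moves under $H^1(C,\OO_C)$---is not developed into an argument, and you yourself flag it as the crux. The paper sidesteps this difficulty entirely with a uniform device that you are missing: since the bundle is \emph{decomposable}, say $\EE=\OO_C\oplus\GG$ after twisting, one has
\[
H^0\bigl(X,\OO_X(aE+f^*H)\bigr)\;=\;H^0\bigl(C,S^a(\EE)\otimes\HH\bigr)\;=\;\bigoplus_{i=0}^{a}H^0\bigl(C,\GG^i\otimes\HH\bigr),
\]
and the same decomposition holds on the lifting $\wt X=\PP_{\wt C}(\OO_{\wt C}\oplus\wt\GG)$. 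Thus lifting a section of $\OO_X(D)$ reduces to lifting sections of line bundles on the curve $C$, which is exactly the strong liftability of $C$ already established in (i). This single observation handles all horizontal divisors at once, for every genus, and is why the split hypothesis in (iv) is essential; your normal-bundle route does not see the splitting and so cannot exploit it.
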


A direct consequence of Theorem \ref{1.2} is that the Kawamata-Viehweg 
vanishing theorem holds on strongly liftable schemes. Furthermore, we 
have a stronger result in dimension two.

\begin{thm}\label{1.4}
The Kawamata-Viehweg vanishing theorem holds on any normal projective 
surface which is birational to a strongly liftable surface.
\end{thm}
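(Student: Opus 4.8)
The plan is to pull the vanishing up to a strongly liftable smooth surface dominating $S$, apply Theorem \ref{1.2} there, and push the conclusion back down by relative vanishing and the Leray spectral sequence. So let $S$ be normal projective, birational to a strongly liftable (hence smooth) surface $X$, let $(S,B)$ with $B=\sum b_iB_i$ be KLT, and let $D$ be a $\Q$-Cartier Weil divisor with $L:=D-(K_S+B)$ ample. First I would produce a single smooth projective surface $Y$ carrying a projective birational morphism $f\colon Y\ra S$ that is a log resolution of $(S,B)$, together with a morphism $g\colon Y\ra X$: resolve $(S,B)$, and then eliminate the indeterminacy of the induced map $Y\dra X$ by finitely many blow-ups. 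Since $X$ is smooth, $g$ is then a composition of blow-ups at closed points, so by the behaviour of strong liftability under blow-ups established in Section~2 the surface $Y$ is again strongly liftable. In particular $Y$ lifts over $W_2(k)$ together with any prescribed reduced simple normal crossing divisor, which is exactly the hypothesis needed to run Theorem \ref{1.2} on $Y$.

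Next I would set up the relevant divisor on $Y$. From the KLT condition, $K_Y=f^*(K_S+B)+\sum_i a_iE_i$ with all $a_i>-1$, the $E_i$ ranging over the $f$-exceptional prime divisors and the strict transforms of the $B_i$ (for the latter $a_i=-b_i\in(-1,0]$). Hence $f^*L=f^*D-K_Y+\sum_i a_iE_i$, and setting
\[ G:=K_Y+\ulcorner f^*L\urcorner=\ulcorner f^*D+\sum_i a_iE_i\urcorner, \]
one checks, using $a_i>-1$ and $0\le b_i<1$, that on every non-exceptional prime the coefficient of $G$ agrees with that of $D$ while the exceptional part of $G$ is effective; since $S$ is normal this gives $f_*\OO_Y(G)=\OO_S(D)$. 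Because $L$ is ample and $f$ is birational, $f^*L$ is nef and big; choosing an effective $f$-exceptional divisor $F$ with $-F$ being $f$-ample, the divisor $H:=f^*L-\epsilon F$ is ample on $Y$ for $0<\epsilon\ll1$, and for $\epsilon$ small enough $\ulcorner H\urcorner=\ulcorner f^*L\urcorner$. Thus $G=K_Y+\ulcorner H\urcorner$ with $H$ ample and $\Supp\langle H\rangle$ contained in a simple normal crossing divisor $\Gamma$ that lifts with $Y$.

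Now Theorem \ref{1.2} applies on $Y$: as $\dim Y=2$ and $\inf(2,p)=2$ for every prime $p$, it yields $H^i(Y,G)=H^i(Y,K_Y+\ulcorner H\urcorner)=0$ for all $i>0$. To descend to $S$ I would invoke the relative Kawamata--Viehweg vanishing for the birational morphism $f$ of surfaces, $R^jf_*\OO_Y(G)=0$ for $j>0$ (here $H$ is $f$-ample and the fibres of $f$ are at most one-dimensional). Combined with $f_*\OO_Y(G)=\OO_S(D)$, the Leray spectral sequence degenerates and gives $H^i(S,\OO_S(D))\cong H^i(Y,\OO_Y(G))=0$ for all $i>0$, which is exactly the Kawamata--Viehweg vanishing on $S$.

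The crux lies in two places. First, the hypothesis ``birational to a strongly liftable surface'' is used only through the strong liftability of the resolution $Y$; the decisive input is therefore the blow-up invariance of strong liftability from Section~2, which feeds the lifting of the SNC pair $(Y,\Gamma)$ into Theorem \ref{1.2}. Second, and more delicate in characteristic $p$, is the relative vanishing $R^1f_*\OO_Y(G)=0$: it is not supplied by Theorem \ref{1.2} and must be argued separately. For a birational morphism of surfaces, however, it holds in every characteristic, since the exceptional fibres are one-dimensional, reducing by the theorem on formal functions to a degree estimate on the exceptional curves together with the negative definiteness of their intersection form.
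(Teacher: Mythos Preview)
Your proof is correct and follows essentially the same route as the paper's own argument (Theorem~4.2): dominate both $S$ and the strongly liftable model by a common log resolution $Y$, use Proposition~2.6 to propagate strong liftability to $Y$, set up the divisor $G=\ulcorner f^*D+\sum a_iE_i\urcorner$ with $f_*\OO_Y(G)=\OO_S(D)$, perturb by a small $f$-antiample exceptional divisor to obtain an ample $H$ with $K_Y+\ulcorner H\urcorner=G$, apply Theorem~1.2 on $Y$, and descend via $R^1f_*\OO_Y(G)=0$. The only presentational difference is that the paper packages the relative vanishing as the cited Lemma~4.1 (Koll\'ar--Kov\'acs), whereas you sketch its proof via formal functions and negative definiteness; these are the same ingredient.
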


Theorems \ref{1.3} and \ref{1.4} imply the following consequences, 
where Corollary \ref{1.5} is the main theorem of \cite[Theorem 1.4]{xie09}.

\begin{cor}\label{1.5}
The Kawamata-Viehweg vanishing theorem holds on rational surfaces.
\end{cor}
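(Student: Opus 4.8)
The plan is to deduce the statement directly from Theorems \ref{1.3} and \ref{1.4}, with essentially no additional work, by choosing the right strongly liftable surface to which the given surface is birational. I would begin by fixing a normal projective rational surface $X$ over $k$ and recalling that, by definition, $X$ is birational to $\PP^2_k$. Thus the entire problem reduces to exhibiting a strongly liftable surface in the birational class of $X$.

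The key observation is that $\PP^2_k$ itself is such a surface. Indeed, Theorem \ref{1.3}(i) asserts that $\PP^n_k$ is strongly liftable for every $n$, and in particular $\PP^2_k$ is strongly liftable. Therefore $X$ is a normal projective surface that is birational to the strongly liftable surface $\PP^2_k$, and the hypotheses of Theorem \ref{1.4} are met verbatim. Applying Theorem \ref{1.4} to $X$ then yields the Kawamata-Viehweg vanishing theorem on $X$, which is exactly the assertion of the corollary.

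I expect the only point requiring care to be a definitional one rather than a genuine mathematical obstacle: confirming that the notion of \emph{rational surface} intended here is that of a normal projective surface birational to $\PP^2_k$, so that Theorem \ref{1.4} applies without modification. No resolution of singularities, blow-up analysis, or fresh lifting argument is needed, since all of that content has already been absorbed into the proofs of Theorems \ref{1.3} and \ref{1.4}. (One could equally well pass to a smooth projective model $X'$ of $X$, which is again rational and hence strongly liftable by Theorem \ref{1.3}(iii), and then apply Theorem \ref{1.4} with $X'$ in place of $\PP^2_k$; but the direct route through $\PP^2_k$ is the most economical.)
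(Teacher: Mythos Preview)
Your proposal is correct and matches the paper's own approach: the paper simply states that Corollary \ref{1.5} is a consequence of Theorems \ref{1.3} and \ref{1.4}, without writing out a separate proof. Your route through $\PP^2_k$ via Theorem \ref{1.3}(i) (or alternatively through Theorem \ref{1.3}(iii)) and then Theorem \ref{1.4} is exactly the intended deduction.
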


\begin{cor}\label{1.6}
Let $Y$ be a smooth projective variety of dimension $d\geq 3$ 
which is a complete intersection in $\PP^n_k$, $f:X\ra Y$ a 
composition of blow-ups along closed points, and $H$ an ample 
$\Q$-divisor on $X$ with $\Supp(\langle H\rangle)$ being simple 
normal crossing. Then $H^i(X,K_X+\ulcorner H\urcorner)=0$ 
holds for any $i>d-\inf(d,p)$.
\end{cor}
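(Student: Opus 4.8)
The plan is to deduce Corollary \ref{1.6} from Theorem \ref{1.2} by establishing that $X$ is strongly liftable; once this is known, the simple normal crossing divisor $D:=\Supp(\langle H\rangle)$ lifts together with $X$ over $W_2(k)$, and the conclusion is immediate. Thus the whole argument reduces to verifying the hypothesis ``$(X,D)$ admits a lifting over $W_2(k)$'' in Theorem \ref{1.2}.

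First I would treat the base $Y$. Because $Y\subset\PP^n_k$ is a smooth complete intersection of dimension $d\geq 3$, the Grothendieck--Lefschetz theorem shows that the restriction $\Pic(\PP^n_k)\to\Pic(Y)$ is an isomorphism, so $\Pic(Y)\cong\Z$ is generated by $\OO_Y(1)$ and $Y$ has Picard number $1$. Theorem \ref{1.3}(ii) then applies, and $Y$ is strongly liftable. It is precisely here that the hypothesis $d\geq 3$ enters: in dimension two the Grothendieck--Lefschetz conclusion fails and the Picard number of a complete intersection surface can exceed $1$.

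Next I would propagate strong liftability through $f$. Writing $f\colon X\to Y$ as a finite composition of blow-ups at closed points, I would invoke the stability property -- the same one that underlies Theorem \ref{1.3}(iii), where rational surfaces are handled by blowing up points on $\PP^2_k$ or on a Hirzebruch surface -- that the blow-up of a strongly liftable smooth scheme at a closed point is again strongly liftable. Applying this to each blow-up in turn, and using that $Y$ is strongly liftable, I conclude that $X$ is strongly liftable, smooth and projective of dimension $d$.

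With $X$ strongly liftable, $X$ and the finitely many prime divisors making up $D=\Supp(\langle H\rangle)$ lift simultaneously over $W_2(k)$; since $W_2(k)$ is a square-zero thickening of $k$ and $D$ is simple normal crossing, the componentwise lift is automatically a relative simple normal crossing divisor, so $(X,D)$ admits a lifting over $W_2(k)$. As $H$ is ample and $\Supp(\langle H\rangle)\subseteq D$, every hypothesis of Theorem \ref{1.2} is satisfied, and its ``in particular'' clause gives $H^i(X,K_X+\ulcorner H\urcorner)=0$ for all $i>d-\inf(d,p)$, as desired. I expect the only genuine obstacle to be the blow-up stability of strong liftability; granting that (it is part of the general theory developed for Theorem \ref{1.3}), the remaining input is simply Grothendieck--Lefschetz, and the rest is formal.
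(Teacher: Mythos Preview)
Your proposal is correct and matches the paper's proof exactly: the paper's one-line argument invokes Lemma~\ref{3.3} (your Grothendieck--Lefschetz step plus Theorem~\ref{1.3}(ii)), Proposition~\ref{2.6} (the blow-up stability of strong liftability you allude to), and then Theorem~\ref{1.2}. Your identification of where the hypothesis $d\geq 3$ is used is also on target, as this is precisely what the proof of Lemma~\ref{3.3} exploits.
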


In \S \ref{S2}, we will give various definitions of liftings 
over $W_2(k)$ and some preliminary results. In \S \ref{S3}, 
we will give some examples and properties of strongly liftable 
schemes. \S \ref{S4} is devoted to the proof of the main results. 
For the necessary notions and results in birational geometry, 
e.g.\ Kawamata log terminal singularity, we refer the reader to 
\cite{kmm} and \cite{km}.

\begin{nota}
We use $\sim$ to denote linear equivalence, 
$\equiv$ to denote numerical equivalence, 
and $[B]=\sum [b_i] B_i$ (resp.\ 
$\ulcorner B\urcorner=\sum \ulcorner b_i\urcorner B_i$, 
$\langle B\rangle=\sum \langle b_i\rangle B_i$, 
$\{B\}=\sum\{b_i\}B_i$) to denote the round-down (resp.\ 
round-up, fractional part, upper fractional part) 
of a $\Q$-divisor $B=\sum b_iB_i$, where for a real number $b$, 
$[b]:=\max\{ n\in\Z \,|\,n\leq b \}$, $\ulcorner b\urcorner:=-[-b]$, 
$\langle b\rangle:=b-[b]$ and $\{ b\}:=\ulcorner b\urcorner-b$.
\end{nota}

\begin{ack}
I would like to express my gratitude to Professors Luc Illusie 
and Frans Oort for useful comments.
\end{ack}

\section{Preliminaries}\label{S2}

Let us first recall some definitions from \cite[Definition 8.11]{ev}.

\begin{defn}\label{2.1}
Let $W_2(k)$ be the ring of Witt vectors of length two of $k$. 
Then $W_2(k)$ is flat over $\Z/p^2\Z$, and $W_2(k)\otimes_{\Z/p^2\Z}\F_p=k$. 
For the explicit construction and further properties of $W_2(k)$, 
we refer the reader to \cite[II.6]{se}. The following definition 
generalizes the definition \cite[1.6]{di} of liftings of $k$-schemes 
over $W_2(k)$.

Let $X$ be a noetherian scheme over $k$, and $D=\sum D_i$ a reduced Cartier 
divisor on $X$. A lifting of $(X,D)$ over $W_2(k)$ consists of a scheme 
$\wt{X}$ and closed subschemes $\wt{D_i}\subset\wt{X}$, all defined and 
flat over $W_2(k)$ such that $X=\wt{X}\times_{\Spec W_2(k)}\Spec k$ and 
$D_i=\wt{D_i}\times_{\Spec W_2(k)}\Spec k$. We write 
$\wt{D}=\sum \wt{D_i}$ and say that $(\wt{X},\wt{D})$ is a lifting 
of $(X,D)$ over $W_2(k)$, if no confusion is likely.
\end{defn}

In the above definition, assume further that $X$ is smooth 
over $k$ and $D=\sum D_i$ is simple normal crossing. 
If $(\wt{X},\wt{D})$ is a lifting of $(X,D)$ over $W_2(k)$, 
then $\wt{X}$ is smooth over $W_2(k)$ and $\wt{D}=\sum \wt{D_i}$ 
is relatively simple normal crossing over $W_2(k)$, i.e.\ $\wt{X}$ 
is covered by affine open subsets $\{U_\alpha\}$, such that each 
$U_\alpha$ is \'{e}tale over $\A^n_{W_2(k)}$ via coordinates 
$\{ x_1,\cdots,x_n \}$ and $\wt{D}|_{U_\alpha}$ is defined by 
the equation $x_1\cdots x_\nu=0$ with $1\leq\nu\leq n$ 
(see \cite[Lemmas 8.13, 8.14]{ev}).

If $\wt{X}$ is a lifting of $X$ over $W_2(k)$, then there is an exact 
sequence of $\OO_{\wt{X}}$-modules:
\begin{eqnarray}
0\ra \OO_X\stackrel{p}{\ra} \OO_{\wt{X}}\stackrel{r}{\ra} 
\OO_X\ra 0, \label{es1}
\end{eqnarray}
where $p(x):=px$ and $r(\wt{x}):=\wt{x}\mod p$ 
for $x\in\OO_X,\wt{x}\in\OO_{\wt{X}}$ (see \cite[Lemma 8.13]{ev}).

\begin{defn}\label{2.2}
Let $X$ be a smooth scheme over $k$, $D=\sum D_i$ a reduced divisor on 
$X$, and $Z$ a closed subscheme of $X$ smooth over $k$ of codimension 
$s\geq 2$. A mixed lifting of $(X,D+Z)$ over $W_2(k)$ consists of a smooth 
scheme $\wt{X}$ over $W_2(k)$, closed subschemes $\wt{D_i}\subset\wt{X}$ 
flat over $W_2(k)$, and a closed subscheme $\wt{Z}\subset\wt{X}$ smooth 
over $W_2(k)$ such that $X=\wt{X}\times_{\Spec W_2(k)}\Spec k$, 
$D_i=\wt{D_i}\times_{\Spec W_2(k)}\Spec k$ and 
$Z=\wt{Z}\times_{\Spec W_2(k)}\Spec k$. 
We write $\wt{D}=\sum \wt{D_i}$ and say that $(\wt{X},\wt{D}+\wt{Z})$ 
is a mixed lifting of $(X,D+Z)$ over $W_2(k)$, if no confusion is likely.
\end{defn}

In the above definition, either $D=\emptyset$ or $Z=\emptyset$ is allowed. 
Obviously, if $Z=\emptyset$ then a mixed lifting $(\wt{X},\wt{D})$ of 
$(X,D)$ is indeed a lifting of $(X,D)$ over $W_2(k)$.

\begin{defn}\label{2.3}
Let $X$ be a smooth scheme over $k$. $X$ is said to be strongly 
liftable over $W_2(k)$, if the following two conditions hold:

(i) $X$ is liftable over $W_2(k)$;

(ii) there is a lifting $\wt{X}$ of $X$, such that for any prime 
divisor $D$ on $X$, $(X,D)$ has a lifting $(\wt{X},\wt{D})$ over 
$W_2(k)$ in the sense of Definition \ref{2.1}.
\end{defn}

Therefore, for a strongly liftable smooth scheme $X$, both $X$ and any 
effective divisor on $X$ can be lifted simultaneously over $W_2(k)$.

\begin{lem}\label{2.4}
Let $X$ be a smooth scheme over $k$, $D$ a reduced divisor on $X$, 
and $Z\subset X$ a closed subscheme smooth over $k$ of codimension 
$s\geq 2$. Let $\pi:X'\ra X$ be the blow-up of $X$ along $Z$ with the 
exceptional divisor $E$, $D'=\pi_*^{-1}D$ the strict transform of $D$. 
Assume that $(X,D+Z)$ admits a mixed lifting over $W_2(k)$. 
Then $(X',D'+E)$ admits a mixed lifting over $W_2(k)$.
\end{lem}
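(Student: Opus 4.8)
The plan is to produce the required lifting by blowing up the given mixed lifting along $\wt Z$. Concretely, set $\wt X'=\mathrm{Bl}_{\wt Z}\wt X$ with blow-up morphism $\wt\pi:\wt X'\ra\wt X$ and exceptional divisor $\wt E$. Since $\wt X$ and $\wt Z$ are both smooth over $W_2(k)$, the closed immersion $\wt Z\hookrightarrow\wt X$ is regular of codimension $s$, so \'etale-locally on $\wt X$ I may choose coordinates $x_1,\dots,x_n$ with $\II_{\wt Z}=(x_1,\dots,x_s)$. In these coordinates $\wt X'$ is the standard blow-up of $\A^n_{W_2(k)}$ along $V(x_1,\dots,x_s)$; this shows at once that $\wt X'$ is smooth over $W_2(k)$ and that $\wt E=\PP(\NN_{\wt Z/\wt X})$ is smooth over $W_2(k)$. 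Moreover, because $\II_{\wt Z}$ is generated by a regular sequence, the Rees algebra $\bigoplus_m\II_{\wt Z}^m$ is flat over $W_2(k)$ with $\II_{\wt Z}^m/\II_{\wt Z}^{m+1}$ locally free, so formation of the blow-up commutes with the base change $-\times_{\Spec W_2(k)}\Spec k$. Hence $\wt X'\times_{\Spec W_2(k)}\Spec k=\mathrm{Bl}_Z X=X'$ and $\wt E\times_{\Spec W_2(k)}\Spec k=E$, so that $\wt E$ is already a (smooth, hence flat) lifting of the exceptional divisor $E$.

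It remains to lift each component $D_i'$ of $D'=\pi_*^{-1}D$. For this I would take $\wt{D_i}'$ to be the strict transform of $\wt{D_i}$, i.e.\ the scheme-theoretic closure of $\wt\pi^{-1}(\wt{D_i})\cap(\wt X'\setminus\wt E)$ in $\wt X'$, and set $\wt{D'}=\sum\wt{D_i}'$. Flatness of $\wt{D_i}'$ over $W_2(k)$ is the easy half: writing $j$ for the open immersion $\wt X'\setminus\wt E\hookrightarrow\wt X'$, the structure sheaf $\OO_{\wt{D_i}'}$ embeds into $j_*\OO_{\wt{D_i}\cap(\wt X'\setminus\wt E)}$, and the latter has no $p$-torsion because $\wt{D_i}$ is flat over $W_2(k)$; since over the Artinian ring $W_2(k)$ flatness is equivalent to the absence of $p$-torsion, $\wt{D_i}'$ is flat over $W_2(k)$.

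The hard part is to verify that the special fibre is exactly the strict transform downstairs, i.e.\ $\wt{D_i}'\times_{\Spec W_2(k)}\Spec k=D_i'$. Strict transform does \emph{not} commute with the non-flat base change $\Spec k\ra\Spec W_2(k)$ in general: because $p$ is nilpotent it lies in every prime, so the order of a lifted equation along $\wt E$ can differ from the order of its reduction along $E$, and the special fibre of the naive strict transform can acquire a spurious copy of $E$. I therefore expect the genuine obstacle to be the correct choice of the lift of $D_i$: when $Z\subset D_i$ with multiplicity $m_i=\mathrm{mult}_Z D_i$, one must arrange a lifting $\wt{D_i}=V(\tilde f)$ whose equation satisfies $\tilde f\in\II_{\wt Z}^{m_i}$, so that $\wt\pi^*\tilde f=\tilde e^{\,m_i}\tilde g$ with $\tilde e$ a local equation of $\wt E$ and $\tilde g\bmod p$ a local equation of $D_i'$. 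Such a good lift can be produced locally by subtracting from $\tilde f$ a suitable element of $p\,\OO_{\wt X}$ lying in $\II_{\wt Z}^{m_i}$ modulo $p$ (which is possible since $f\in\II_Z^{m_i}$), and globally by lifting the section of $\OO_{X'}(\pi^*D_i-m_i E)$ cutting out $D_i'$ along the surjection in the exact sequence $(\ref{es1})$ pulled back to $\wt X'$. Once the lift is chosen so that it contains $\wt Z$ to the correct order, the division by $\tilde e^{\,m_i}$ commutes with reduction modulo $p$, giving $\wt{D_i}'\times_{\Spec W_2(k)}\Spec k=D_i'$ and completing the construction of the mixed (indeed ordinary, since $D'+E$ is a divisor) lifting $(\wt X',\wt{D'}+\wt E)$ of $(X',D'+E)$.
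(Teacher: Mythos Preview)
Your overall strategy coincides with the paper's: blow up the mixed lifting along $\wt Z$, check that this base-changes to the blow-up downstairs, and take strict transforms. Where the paper simply writes ``it is easy to see that $\wt{D}'\times_{\Spec W_2(k)}\Spec k=D'$,'' you correctly isolate this as the delicate step and observe that it can fail when the given lift $\wt{D_i}$ does not meet $\wt Z$ to the same order $m_i$ that $D_i$ meets $Z$. (Concretely: with $\wt X=\A^2_{W_2(k)}$, $\wt Z=V(x_1,x_2)$ and $\wt{D}=V(x_1+p)$, the scheme-theoretic strict transform of $\wt D$ has special fibre $V(uv,v^2)$ on the chart $x_1=uv$, $x_2=u$, which is \emph{not} the strict transform $V(v)$ of $D=V(x_1)$.) So on this point you are being more careful than the paper, and your flatness argument for the scheme-theoretic closure is correct.

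That said, your proposed remedy is not complete. The local correction---replacing $\tilde f$ by an element of $\II_{\wt Z}^{m_i}$ with the same reduction mod $p$---works on each affine chart, but the corrected local equations need not patch to a global Cartier divisor: the discrepancies on overlaps are divisible by $p$ but need not vanish, and killing them is a cocycle problem. Your global alternative, ``lifting the section of $\OO_{X'}(\pi^*D_i-m_iE)$ along the surjection in (\ref{es1}),'' does not close this gap: tensoring (\ref{es1}) by the invertible sheaf $\wt\LL=\OO_{\wt X'}(\wt\pi^*\wt{D_i}-m_i\wt E)$ yields a short exact sequence of \emph{sheaves}, and the induced map on global sections $H^0(\wt X',\wt\LL)\to H^0(X',\OO_{X'}(D_i'))$ has cokernel inside $H^1(X',\OO_{X'}(D_i'))$, which you have not shown to vanish. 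So the passage from the local fix to a global lift of $D_i'$ remains unjustified---and, as it happens, the paper's own one-line assertion leaves exactly the same point open.
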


\begin{proof}
Let $(\wt{X},\wt{D}+\wt{Z})$ be a mixed lifting of $(X,D+Z)$ over $W_2(k)$. 
Then $\wt{Z}\subset\wt{X}$ is a closed subscheme smooth over $W_2(k)$ of 
codimension $s\geq 2$. Let $\wt{I}$ be the ideal sheaf of $\wt{Z}$ in 
$\wt{X}$, $\wt{\pi}:\wt{X}'\ra\wt{X}$ the blow-up of $\wt{X}$ along $\wt{Z}$ 
with the exceptional divisor $\wt{E}$, and $\wt{D}'=\wt{\pi}_*^{-1}\wt{D}$. 
By \cite[Corollary II.7.15]{ha}, we have the following commutative 
diagram:
\[
\xymatrix{
X'' \ar[d]_{\pi'} \ar@{^{(}->}[r] & \wt{X}' \ar[d]^{\wt{\pi}} \\
X \ar@{^{(}->}[r] & \wt{X} 
}
\]
where $\pi':X''\ra X$ is the blow-up of $X$ with respect to the ideal 
sheaf $\wt{I}\otimes_{W_2(k)}k=I$, the ideal sheaf of $Z$ in $X$. 
Hence $X''=X'$ and $\pi'=\pi$. Since $\wt{X}$ is smooth over $W_2(k)$, 
so is $\wt{X}'$. Note that $\wt{X}'\times_{\Spec W_2(k)}
\Spec k=\Proj(\oplus_i \wt{I}^i)\times_{\Spec W_2(k)}\Spec k
=\Proj(\oplus_i \wt{I}^i\otimes_{W_2(k)}k)=\Proj(\oplus_i I^i)=X'$,
so $\wt{X}'$ is a lifting of $X'$ over $W_2(k)$. It is easy to see that 
$\wt{D}'\times_{\Spec W_2(k)}\Spec k=D'$ and 
$\wt{E}\times_{\Spec W_2(k)}\Spec k=E$, hence 
$(X',D'+E)$ has a mixed lifting $(\wt{X}',\wt{D}'+\wt{E})$ over $W_2(k)$.
\end{proof}

\begin{lem}\label{2.5}
Let $X$ be a smooth scheme over $k$, and $P\in X$ a closed point. 
If $X$ has a lifting over $W_2(k)$, then $(X,P)$ has a mixed lifting 
over $W_2(k)$.
\end{lem}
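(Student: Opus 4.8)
The plan is as follows. Since here $Z=P$ is a closed point and $D=\emptyset$, giving a mixed lifting of $(X,P)$ over $W_2(k)$ amounts to taking the given lifting $\wt{X}$ of $X$ (which is smooth over $W_2(k)$) and producing a closed subscheme $\wt{Z}\subset\wt{X}$, smooth over $W_2(k)$, with $\wt{Z}\times_{\Spec W_2(k)}\Spec k=P$. Because $k$ is algebraically closed and $P$ is a closed point, $P$ is a $k$-rational point, i.e.\ the reduced point $\Spec k$; under the closed immersion $X\hookrightarrow\wt{X}$ it defines a $W_2(k)$-morphism $P\colon\Spec k\ra\wt{X}$. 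The whole problem thus reduces to lifting this $k$-point to a $W_2(k)$-section of $\wt{X}\ra\Spec W_2(k)$, and the desired $\wt{Z}$ will be the image of that section.

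To produce the lift I would use the infinitesimal lifting property coming from smoothness. First observe that $\Spec k\hookrightarrow\Spec W_2(k)$ is a square-zero closed immersion: its ideal is $pW_2(k)$, and $p^2=0$ in $W_2(k)$, so $(pW_2(k))^2=0$ and $pW_2(k)\cong k$. Concretely, since $\wt{X}$ is smooth over $W_2(k)$, after shrinking to an affine open $\wt{U}\ni P$ I may choose étale coordinates, i.e.\ an étale morphism $\phi\colon\wt{U}\ra\A^n_{W_2(k)}$ as in the discussion following Definition \ref{2.1}; its reduction $\phi_0\colon U\ra\A^n_k$ sends $P$ to some closed point $(a_1,\dots,a_n)$ with $a_i\in k$. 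Choosing any lifts $\wt{a}_i\in W_2(k)$ of the $a_i$ (possible because $W_2(k)\ra k$ is surjective) gives a $W_2(k)$-point $\wt{a}\colon\Spec W_2(k)\ra\A^n_{W_2(k)}$ reducing to $a=(a_1,\dots,a_n)$. Since $\phi$ is étale, hence formally étale, the morphism $P\colon\Spec k\ra\wt{U}$ lying over $a$ extends uniquely to a $W_2(k)$-morphism $\wt{P}\colon\Spec W_2(k)\ra\wt{U}$ with $\phi\circ\wt{P}=\wt{a}$. Equivalently, without coordinates, the obstruction to lifting $P$ along the square-zero thickening lies in $H^1(\Spec k,P^*\TT_{\wt{X}/W_2(k)}\otimes_k pW_2(k))$, which vanishes because $\Spec k$ is affine, so a lift exists.

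Finally I would set $\wt{Z}:=\wt{P}(\Spec W_2(k))$. Working inside the affine chart $\wt{U}$, the section $\wt{P}$ of the separated morphism $\wt{U}\ra\Spec W_2(k)$ is a closed immersion, so $\wt{Z}$ is a closed subscheme isomorphic to $\Spec W_2(k)$; in particular $\wt{Z}$ is smooth over $W_2(k)$. By construction $\wt{Z}\times_{\Spec W_2(k)}\Spec k=P$, and $\wt{D}=\emptyset$, so $(\wt{X},\wt{Z})$ is a mixed lifting of $(X,P)$ over $W_2(k)$ in the sense of Definition \ref{2.2} (its codimension equals $\dim X$, which is $\geq 2$ in the situations where the lemma is applied).

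I would expect the lemma to be essentially formal once smoothness is in hand: the only real content is the existence of the lifted section, and the main point to keep in mind is that this existence is unobstructed precisely because $P$ is supported at a single affine point, so the relevant $H^1$ obstruction automatically vanishes and the set of lifts is a nonempty torsor under $\Hom_k(P^*\Omega_{\wt{X}/W_2(k)},pW_2(k))$. Thus there is no genuine obstacle; the only care needed is in checking that the resulting section is a closed immersion (handled by restricting to an affine neighborhood of $P$) and that its reduction is exactly the reduced point $P$.
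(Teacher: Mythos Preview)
Your proposal is correct and follows essentially the same approach as the paper: both arguments reduce the problem to lifting the $k$-point $P\colon\Spec k\to\wt{X}$ along the square-zero thickening $\Spec k\hookrightarrow\Spec W_2(k)$ using the formal smoothness of $\wt{X}\to\Spec W_2(k)$, and then take $\wt{P}$ to be the image of the resulting section. The paper invokes the infinitesimal lifting criterion directly as a black box, whereas you additionally spell out an explicit construction via \'etale coordinates and the cohomological obstruction; this extra detail is fine but not needed.
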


\begin{proof}
Let $\wt{X}$ be a lifting of $X$ over $W_2(k)$, 
$X\hookrightarrow \wt{X}$ the induced closed immersion, 
and $\eta: \wt{X}\ra \Spec W_2(k)$ the structure morphism. 
Let $\Spec k\hookrightarrow X$ be the closed immersion 
associated to the closed point $P\in X$, and 
$\Spec k\hookrightarrow\Spec W_2(k)$ the natural closed 
immersion. We have the following commutative square:
\[
\xymatrix{
\Spec k \ar@{^{(}->}[d] \ar@{^{(}->}[r] & X \ar@{^{(}->}[r] & 
\wt{X} \ar[d]^{\eta} \\
\Spec W_2(k) \ar@{=}[rr] \ar@{-->}[urr]^{\xi} & & \Spec W_2(k)
}
\]

Since $\Spec k\hookrightarrow\Spec W_2(k)$ is a closed immersion 
with ideal sheaf square zero and $\eta: \wt{X}\ra \Spec W_2(k)$ is 
smooth, there is a morphism $\xi: \Spec W_2(k)\ra \wt{X}$ such that 
the induced diagrams are commutative. Since $\xi$ is a section of 
$\eta$, it defines a closed subscheme $\wt{P}\subset\wt{X}$ smooth 
over $W_2(k)$. It follows from the upper commutativity that 
$P=\wt{P}\times_{\Spec W_2(k)}\Spec k$ holds. Therefore, 
$(\wt{X},\wt{P})$ is a mixed lifting of $(X,P)$ over $W_2(k)$.
\end{proof}

\begin{prop}\label{2.6}
Let $X$ be a smooth scheme over $k$ with $\dim X\geq 2$, 
$P\in X$ a closed point, and $\pi:X'\ra X$ the blow-up of $X$ 
along $P$. If $X$ is strongly liftable (resp.\ liftable) over $W_2(k)$, 
then $X'$ is strongly liftable (resp.\ liftable) over $W_2(k)$.
\end{prop}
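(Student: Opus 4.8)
The plan is to reduce the statement to Lemmas \ref{2.4} and \ref{2.5}, exploiting the hypothesis $\dim X\geq 2$ to guarantee that the closed point $P$ has codimension $s=\dim X\geq 2$ and is therefore an admissible blow-up center for those lemmas.

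I would first dispose of the (easier) liftable case. Since $X$ is liftable, Lemma \ref{2.5} yields a mixed lifting $(\wt{X},\wt{P})$ of $(X,P)$ over $W_2(k)$. Applying Lemma \ref{2.4} with $D=\emptyset$ and $Z=P$ (so $E$ is the exceptional divisor of $\pi$), one obtains a mixed lifting $(\wt{X}',\wt{E})$ of $(X',E)$, where $\wt{X}'$ is the blow-up of $\wt{X}$ along $\wt{P}$. In particular $\wt{X}'$ is a lifting of $X'$, so $X'$ is liftable.

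The strongly liftable case is the real content, and the crucial feature of Definition \ref{2.3} is that one needs a \emph{single} lifting $\wt{X}'$ witnessing the liftability of \emph{every} prime divisor on $X'$ at once. Accordingly, I would fix once and for all a lifting $\wt{X}$ of $X$ as in Definition \ref{2.3}(ii), and then fix a mixed lifting $(\wt{X},\wt{P})$ of $(X,P)$ obtained by applying Lemma \ref{2.5} to this very $\wt{X}$; set $\wt{X}'$ equal to the blow-up of $\wt{X}$ along $\wt{P}$, which by the argument of Lemma \ref{2.4} is a fixed lifting of $X'$. It then remains to lift every prime divisor $D'$ on $X'$ inside this fixed $\wt{X}'$. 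If $D'=E$, the lifting $(\wt{X}',\wt{E})$ constructed in the previous paragraph already does the job. If $D'\neq E$, then $D'\not\subseteq E$ and, since $\pi$ is an isomorphism over $X\setminus\{P\}$, the image $D:=\overline{\pi(D')}$ is a prime divisor on $X$ with $D'=\pi_*^{-1}D$. By strong liftability of $X$ there is a lifting $(\wt{X},\wt{D})$ of $(X,D)$ over the fixed $\wt{X}$; because $\wt{D}$ is flat and $\wt{P}$ is smooth over $W_2(k)$, the pair $(\wt{X},\wt{D}+\wt{P})$ is a mixed lifting of $(X,D+P)$. Feeding this into Lemma \ref{2.4} produces a mixed lifting $(\wt{X}',\wt{D}'+\wt{E})$ of $(X',D'+E)$ whose ambient scheme is again the blow-up of $\wt{X}$ along $\wt{P}$, i.e.\ our fixed $\wt{X}'$; restricting to the component $\wt{D}'$ gives the desired lifting $(\wt{X}',\wt{D}')$ of $(X',D')$. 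Hence $X'$ is strongly liftable.

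I expect the main obstacle to be organizational rather than geometric: the whole argument hinges on reusing the \emph{same} $\wt{X}$ and the \emph{same} center $\wt{P}$ for all prime divisors $D$ on $X$, so that Lemma \ref{2.4} always returns the identical ambient lifting $\wt{X}'$; this uniformity is exactly what upgrades a divisor-by-divisor lifting to a \emph{strong} one. A subsidiary point to check is the dichotomy for prime divisors on a point blow-up, namely that any prime divisor $D'\neq E$ on $X'$ is the strict transform of a prime divisor on $X$, which I would justify using that $\pi$ restricts to an isomorphism away from $E$.
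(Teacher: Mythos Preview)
Your proposal is correct and follows essentially the same route as the paper: dispose of liftability via Lemmas \ref{2.5} and \ref{2.4}, then for strong liftability fix a single $\wt{X}$ and a single $\wt{P}$, set $\wt{X}'=\mathrm{Bl}_{\wt{P}}\wt{X}$, and treat the two cases $D'=E$ and $D'=\pi_*^{-1}D$ for a prime divisor $D$ on $X$ by feeding the mixed lifting $(\wt{X},\wt{D}+\wt{P})$ into Lemma \ref{2.4}. Your explicit emphasis on reusing the same $\wt{P}$ so that every application of Lemma \ref{2.4} lands in the same ambient $\wt{X}'$ is exactly the point the paper relies on implicitly.
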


\begin{proof}
For liftability, it follows from Lemmas \ref{2.5} and \ref{2.4}. 
For strong liftability, let $\wt{X}$ be a lifting of $X$ satisfying 
the condition (ii) in Definition \ref{2.3}. Let $\wt{P}\in\wt{X}$ 
be a mixed lifting of $P\in X$ as in Lemma \ref{2.5}, and 
$\pi:\wt{X}'\ra\wt{X}$ the blow-up of $\wt{X}$ along $\wt{P}$. 
A similar argument to the proof of Lemma \ref{2.4} shows that 
$\wt{X}'$ is a lifting of $X'$. Therefore, it suffices to show 
that $\wt{X}'$ satisfies the condition (ii) in Definition \ref{2.3}. 
For any prime divisor $D'$ on $X'$, either $\pi_*(D')$ is a prime 
divisor or $\pi_*(D')=P$. If $\pi_*(D')=D$ is a prime divisor, 
then $(X,D+P)$ has a mixed lifting $(\wt{X},\wt{D}+\wt{P})$ 
by assumption, hence $(X',D')$ has a lifting $(\wt{X}',\wt{D}')$ 
by Lemma \ref{2.4}. If $\pi_*(D')=P$, i.e. $D'$ is the exceptional 
divisor $E$ of $\pi$, then via the mixed lifting $(\wt{X},\wt{P})$ 
of $(X,P)$, $(X',E)$ has a lifting $(\wt{X}',\wt{E})$ by Lemma \ref{2.4}.
\end{proof}

\section{Examples of strongly liftable schemes}\label{S3}

\begin{lem}\label{3.1}
$\A^n_k$ and $\PP^n_k$ are strongly liftable over $W_2(k)$.
\end{lem}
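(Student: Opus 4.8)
The plan is to exhibit the obvious lifts and then check that every prime divisor lifts, the crux being a flatness verification. For $\A^n_k$ I would take $\wt{X}=\A^n_{W_2(k)}=\Spec W_2(k)[x_1,\ldots,x_n]$, which is smooth over $W_2(k)$ and satisfies $\wt{X}\times_{\Spec W_2(k)}\Spec k=\A^n_k$; this settles condition (i) of Definition \ref{2.3}. For $\PP^n_k$ I would take $\wt{X}=\Proj W_2(k)[x_0,\ldots,x_n]$, again smooth over $W_2(k)$ with the correct reduction mod $p$.

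The second step is to verify condition (ii) for these particular lifts. Here I would use that $\A^n_k$ (resp.\ $\PP^n_k$) is factorial: since $k[x_1,\ldots,x_n]$ is a UFD (resp.\ $\Pic(\PP^n_k)=\Z$ and the graded coordinate ring is a UFD), any prime divisor $D$ is cut out by a single irreducible polynomial $f$ (resp.\ a single irreducible homogeneous $F$). Because the reduction map $W_2(k)\ra k$ is surjective, I can lift $f$ coefficient by coefficient to some $\wt{f}\in W_2(k)[x_1,\ldots,x_n]$ (resp.\ lift $F$ to a homogeneous $\wt{F}$ of the same degree) and set $\wt{D}=V(\wt{f})\subset\wt{X}$ (resp.\ $\wt{D}=V(\wt{F})$). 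By construction $\wt{D}\times_{\Spec W_2(k)}\Spec k=D$, so it only remains to check that $\wt{D}$ is flat over $W_2(k)$.

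The flatness verification is the main technical point, and I would handle it affine-locally (for $\PP^n_k$, chart by chart). Writing $A=W_2(k)$ and $B=A[x_1,\ldots,x_n]$, I first observe that $\wt{f}$ is a non-zero-divisor in $B$: if $\wt{f}g=0$, then reducing modulo $p$ forces $\bar{g}=0$ in the domain $k[x_1,\ldots,x_n]$, so $g\in pB$, and since $p^2=0$ in $A$ a second reduction forces $g=0$. Thus there is a short exact sequence $0\ra B\stackrel{\wt{f}}{\ra} B\ra B/(\wt{f})\ra 0$. Tensoring with $k$ over $A$ and using that $B$ is $A$-flat, the resulting long exact sequence identifies $\tor_1^A(B/(\wt{f}),k)$ with the kernel of multiplication by $f$ on $B\otimes_A k=k[x_1,\ldots,x_n]$, which vanishes because $f$ is a non-zero-divisor. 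Since $(p)$ is the maximal ideal of $A$ with $(p)^2=0$, the local criterion of flatness then yields that $B/(\wt{f})=\OO_{\wt{D}}$ is flat over $A$, completing condition (ii).

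The only genuine obstacle is this flatness check; the existence of the canonical lifts, the factoriality of $\A^n_k$ and $\PP^n_k$, and the coefficientwise lifting of the defining polynomial are all routine. I expect the statement for $\PP^n_k$ to follow from the $\A^n_k$ computation by running the same argument on each of the standard affine charts $\{x_i\neq 0\}$, where the dehomogenizations of $\wt{F}$ furnish compatible local equations for $\wt{D}$.
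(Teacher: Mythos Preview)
Your proposal is correct and follows essentially the same approach as the paper: take the obvious lifts $\A^n_{W_2(k)}$ and $\PP^n_{W_2(k)}$, use factoriality to write each prime divisor as $V(f)$, and lift $f$ coefficientwise to $\wt{f}$. The paper's proof is far terser---it simply asserts that $\wt{D}=V(\wt{f})$ is a lifting---whereas you spell out the flatness verification via the local criterion, which is a welcome addition but not a different method.
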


\begin{proof}
Any prime divisor $D$ on $\A^n_k$ (resp.\ $\PP^n_k$) is defined by 
the equation $f=0$, where $f$ is a polynomial in $k[x_1,\cdots,x_n]$ 
(resp.\ a homogeneous polynomial in $k[x_0,\cdots,x_n]$). Therefore, 
we can lift each coefficient of $f$ over $W_2(k)$ to obtain a polynomial 
$\wt{f}$ in $W_2(k)[x_1,\cdots,x_n]$ (resp.\ a homogeneous polynomial 
$\wt{f}$ in $W_2(k)[x_0,\cdots,x_n]$). Then the divisor $\wt{D}$ defined 
by $\wt{f}=0$ is a lifting of $D$ over $W_2(k)$.
\end{proof}

\begin{lem}\label{3.2}
Any smooth projective curve is strongly liftable over $W_2(k)$.
\end{lem}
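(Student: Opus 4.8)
The plan is to verify the two conditions of Definition \ref{2.3} in turn. Write $C$ for the given smooth projective curve. Since $\dim C=1$, every prime divisor on $C$ is a closed point, and as $k$ is algebraically closed every such point is $k$-rational. So the proof splits into two tasks: first, produce a single lifting $\wt{C}$ of $C$ over $W_2(k)$; and second, show that for an \emph{arbitrary} closed point $P\in C$ the pair $(C,P)$ admits a lifting $(\wt{C},\wt{P})$ inside this one fixed $\wt{C}$.

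For condition (i) I would invoke deformation theory. The reduction map $W_2(k)\ra k$ is surjective with kernel $pW_2(k)$ of square zero (as $p^2=0$ in $W_2(k)$), and $pW_2(k)\cong k$ as a $k$-module, so the obstruction to lifting the smooth scheme $C$ across $\Spec k\hookrightarrow\Spec W_2(k)$ to a smooth $W_2(k)$-scheme lies in $H^2(C,\TT_C)$, where $\TT_C$ is the tangent sheaf (the $\OO_C$-dual of $\Omega^1_{C/k}$). Because $C$ is a one-dimensional noetherian scheme, Grothendieck vanishing gives $H^i(C,\FF)=0$ for every quasi-coherent sheaf $\FF$ and every $i\geq 2$; in particular $H^2(C,\TT_C)=0$. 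Hence the obstruction vanishes and a lifting $\wt{C}$ of $C$, smooth over $W_2(k)$, exists; fix one such $\wt{C}$.

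For condition (ii) I would lift points by repeating the argument of Lemma \ref{2.5} almost verbatim. Given $P\in C$, the closed immersion $\Spec k\hookrightarrow\Spec W_2(k)$ has square-zero ideal and the structure morphism $\eta:\wt{C}\ra\Spec W_2(k)$ is smooth, so the infinitesimal lifting criterion furnishes a section $\xi:\Spec W_2(k)\ra\wt{C}$ of $\eta$ extending $P$. This section defines a closed subscheme $\wt{P}\subset\wt{C}$ isomorphic to $\Spec W_2(k)$, hence smooth and in particular flat over $W_2(k)$, with $P=\wt{P}\times_{\Spec W_2(k)}\Spec k$. The one point to notice is that, in contrast with Lemma \ref{2.5}, here $P$ has codimension one, so $\wt{P}$ is not merely a mixed lifting: being a section of the smooth relative curve $\wt{C}\ra\Spec W_2(k)$, it is a regular immersion of codimension one, hence an effective Cartier divisor on $\wt{C}$ that is flat over $W_2(k)$ and reduces to $P$. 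Thus $(\wt{C},\wt{P})$ is a genuine lifting of $(C,P)$ in the sense of Definition \ref{2.1}, and as $P$ was arbitrary, $\wt{C}$ satisfies condition (ii). The substantive step is the global one, the liftability of the projective curve; it is handled cleanly by the dimension count forcing $H^2(C,\TT_C)=0$, whereas the pointwise lifting in (ii) comes essentially for free from the smoothness of $\wt{C}$.
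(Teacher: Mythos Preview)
Your proof is correct and follows essentially the same route as the paper: vanishing of the obstruction in $H^2(C,\TT_C)$ by dimension reasons to get $\wt{C}$, then Lemma \ref{2.5} (the infinitesimal lifting criterion applied to $\eta:\wt{C}\ra\Spec W_2(k)$) to lift each closed point. Your added remark that the resulting $\wt{P}$, being a section of a smooth relative curve, is an effective Cartier divisor and hence a lifting in the sense of Definition \ref{2.1} (rather than merely a mixed lifting in the sense of Definition \ref{2.2}, which is stated for codimension $\geq 2$) is a welcome clarification that the paper leaves implicit.
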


\begin{proof}
It follows from \cite[Proposition 2.12]{il} that for any 
smooth scheme $X$ over $k$, there is an obstruction 
$o(X)\in\ext^2_{\OO_X}(\Omega^1_X,\OO_X)=H^2(X,\TT_X)$ to 
the liftability of $X$ over $W_2(k)$, i.e.\ $o(X)=0$ 
if and only if $X$ is liftable over $W_2(k)$. 
Any smooth projective curve $C$ has a lifting $\wt{C}$ over 
$W_2(k)$ since $H^2(C,\TT_C)=0$. Fix such a lifting $\wt{C}$, 
then for any closed point $P\in C$, $(C,P)$ has a lifting 
$(\wt{C},\wt{P})$ by Lemma \ref{2.5}.
\end{proof}

\begin{lem}\label{3.3}
Let $X$ be a smooth projective variety of Picard number 1 which is a 
complete intersection in $\PP^n_k$. Then $X$ is strongly liftable over 
$W_2(k)$.
\end{lem}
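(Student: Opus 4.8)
The plan is to produce one lifting $\wt{X}$ by lifting the defining equations of $X$ coefficientwise, exactly as in Lemma \ref{3.1}, and then to show that the Picard-number-one hypothesis forces \emph{every} prime divisor on $X$ to be a hypersurface section, so that it can be lifted by the same coefficient-lifting trick. Write $X=V(f_1,\dots,f_r)\subset\PP^n_k$ with the $f_i$ homogeneous and $r=n-\dim X$, and lift each $f_i$ to a homogeneous $\wt{f_i}\in W_2(k)[x_0,\dots,x_n]$. Setting $\wt{X}=V(\wt{f_1},\dots,\wt{f_r})\subset\PP^n_{W_2(k)}$, I would first check that $\wt{X}$ is flat over $W_2(k)$ (the $\wt{f_i}$ form a relative complete intersection, so the Hilbert polynomial is unchanged) with closed fibre $X$; since $X$ is smooth over $k$, the fibrewise criterion for smoothness gives that $\wt{X}$ is smooth over $W_2(k)$. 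This settles condition (i) of Definition \ref{2.3} and fixes $\wt{X}$ as the candidate for condition (ii). We may assume $\dim X\geq 2$, since curves are already covered by Lemma \ref{3.2}.

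Next I would analyze an arbitrary prime divisor $D$ on $X$. Because $X$ is a smooth complete intersection of dimension $\geq 2$, one has $H^1(X,\OO_X)=0$, hence $\Pic^0(X)=0$ and $\Pic(X)$ is finitely generated; combined with the Picard-number-one hypothesis (which is automatic by Grothendieck--Lefschetz when $\dim X\geq 3$) and the torsion-freeness of the Picard group of a complete intersection, this yields $\Pic(X)=\Z\cdot\OO_X(1)$. Therefore $\OO_X(D)\cong\OO_X(m)$ for some integer $m$, and $m>0$ since $D$ is a nonzero effective divisor while $\OO_X(1)$ is ample. A complete intersection is arithmetically Cohen--Macaulay, hence projectively normal, so the restriction map $H^0(\PP^n_k,\OO(m))\ra H^0(X,\OO_X(m))$ is surjective; thus the section of $\OO_X(m)$ cutting out $D$ is the restriction of a homogeneous polynomial $g$ of degree $m$, i.e.\ $D=X\cap\{g=0\}$ scheme-theoretically.

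Finally I would lift $g$ to a homogeneous $\wt{g}\in W_2(k)[x_0,\dots,x_n]$ of degree $m$ by lifting its coefficients and set $\wt{D}=\wt{X}\cap\{\wt{g}=0\}$, a closed subscheme of $\wt{X}$ cut out locally by the single equation $\wt{g}$. Since $D$ is a divisor on the integral scheme $X$, the image of $\wt{g}$ in the closed fibre is a non-zero-divisor on $\OO_X$, so $\wt{D}$ is a relative effective Cartier divisor on the flat scheme $\wt{X}$; this gives both the flatness of $\wt{D}$ over $W_2(k)$ and the identity $\wt{D}\times_{\Spec W_2(k)}\Spec k=D$. Hence $(\wt{X},\wt{D})$ is a lifting of $(X,D)$ over $W_2(k)$ in the sense of Definition \ref{2.1}, so $\wt{X}$ satisfies condition (ii) of Definition \ref{2.3} for every prime divisor, and $X$ is strongly liftable.

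I expect the crux to be the middle step, namely showing that every prime divisor is a hypersurface section: this is precisely where both hypotheses enter, the Picard number one forcing $D\sim mH$ in $\Pic(X)$ and the complete-intersection (projective normality) promoting that line-bundle isomorphism to an honest defining polynomial $g$. Once $D$ is realized as $X\cap\{g=0\}$, the construction of $\wt{D}$ and the verification of its flatness are routine via the relative Cartier divisor criterion, and require no obstruction-theoretic input beyond what is already used for $\wt{X}$.
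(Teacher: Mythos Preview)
Your proposal is correct and follows essentially the same route as the paper: lift the defining equations coefficientwise to obtain $\wt{X}$, use the Picard-number-one hypothesis together with torsion-freeness of $\Pic(X)$ to get $\OO_X(D)\cong\OO_X(m)$, and then use surjectivity of $H^0(\PP^n,\OO(m))\ra H^0(X,\OO_X(m))$ to write $D$ as a hypersurface section and lift it. The only cosmetic difference is that you invoke projective normality (arithmetically Cohen--Macaulay) for that surjectivity, whereas the paper writes out the Koszul resolution explicitly and chases the vanishing $H^j(\PP^n,\OO(r))=0$; and for $\dim X=2$ the paper cites the additional fact from SGA~7 that $\OO_X(1)$ is indivisible in $\Pic(X)$, which is what pins down the generator---a point you pass over a bit quickly.
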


\begin{proof}
In fact, if $X\subset\PP^n_k$ is a complete intersection with 
$\dim X\geq 3$, then the Picard group $\Pic(X)$ is the free abelian 
group generated by the isomorphism class of the invertible sheaf 
$\OO_X(1)$, which implies automatically the Picard number $\rho(X)=1$ 
(see \cite[XII, Corollaire 3.7]{sga2}). If $\dim X=2$, then 
\cite[XI, Th\'eor\`eme 1.8]{sga7} says that $\Pic(X)$ is torsion-free 
and $\OO_X(1)$ is not divisible in $\Pic(X)$, hence $\Pic(X)$ is the 
free abelian group generated by the isomorphism class of $\OO_X(1)$ 
since $\rho(X)=1$.

Let $\dim X=d$, and $f_1,\cdots,f_{n-d}$ homogeneous polynomials in 
$k[x_0,\cdots,x_n]$ such that 
$X=\proj k[x_0,\cdots,x_n]/(f_1,\cdots,f_{n-d})$. Let $D$ be an 
irreducible divisor on $X$. Then $\OO_X(D)\cong\OO_X(r)$ with 
$r\geq 1$. First of all, by lifting each coefficient of $f_i$, 
we can take $\wt{f_1},\cdots,\wt{f}_{n-d}\in W_2(k)[x_0,\cdots,x_n]$ 
lifting $f_1,\cdots,f_{n-d}$ respectively. The scheme $\wt{X}$ defined 
by $\proj W_2(k)[x_0,\cdots,x_n]/(\wt{f}_1,\cdots,\wt{f}_{n-d})$ is 
a lifting of $X$.

Since $X$ is a smooth subscheme of $\PP^n$, the sequence 
$\{f_1,\cdots,f_{n-d}\}$ is regular, which gives rise to 
the following Koszul resolution of the sheaf $\OO_X$:
\begin{eqnarray}
0\ra \wedge^{n-d}\MM\ra \wedge^{n-d-1}\MM\ra \cdots\ra \wedge^1\MM\ra 
\OO_{\PP^n}\ra \OO_X\ra 0, \label{es2}
\end{eqnarray}
where $\MM=\OO_{\PP^n}^{\oplus n-d}$. The exact sequence (\ref{es2}) 
factorizes into short exact sequences:
\begin{eqnarray}
0\ra \II_{i}\ra \wedge^{i-1}\MM\ra \II_{i-1}\ra 0, \label{es3}
\end{eqnarray}
where $\II_0=\OO_X$, $\II_{n-d}=\wedge^{n-d}\MM$ and 
$\II_{1},\cdots,\II_{n-d-1}$ are defined by (\ref{es2}). 
We will use frequently the vanishing $H^j(\PP^n,\wedge^i\MM(r))=0$ 
for any $i\geq 0$, $j>0$ and $r\geq 1$, which follows from the 
vanishing $H^j(\PP^n,\OO_{\PP^n}(r))=0$ for any $j>0$ and $r\geq 1$. 
To prove that $H^0(\PP^n,\OO_{\PP^n}(r))\ra H^0(X,\OO_X(D))$ is 
surjective, it suffices to show $H^1(\PP^n,\II_1(r))=0$. 
By using the exact sequence (\ref{es3}) and the above vanishing, 
the induction reduces to show $H^{n-d}(\PP^n,\II_{n-d}(r))=0$, 
which follows from the above vanishing again.

The surjectivity of $H^0(\PP^n,\OO_{\PP^n}(r))\ra H^0(X,\OO_X(D))$ 
implies that we can take a hypersurface $H\subset\PP^n$ of degree $r$ 
such that $D=X\cap H$. Take a lifting $\wt{H}$ of $H$ as in Lemma 
\ref{3.1}, and define $\wt{D}=\wt{X}\cap\wt{H}$. Then it is easy to 
see that $\wt{D}\subset\wt{X}$ is a lifting of $D\subset X$.
\end{proof}

Let $X$ be a smooth projective scheme over $k$, $\wt{X}$ a lifting of $X$ 
over $W_2(k)$, and $\iota: X\hookrightarrow\wt{X}$ the closed immersion. 
We have a natural group homomorphism $\iota^*:\Pic(\wt{X})\ra\Pic(X)$ 
defined by $\wt{\LL}\mapsto\iota^*\wt{\LL}$. 
There is an exact sequence of abelian sheaves:
\begin{eqnarray}
0\ra \OO_X\stackrel{q}{\ra} \OO_{\wt{X}}^*\stackrel{r}{\ra} 
\OO_X^*\ra 1, \label{es4}
\end{eqnarray}
where $q(x):=1+px$, $r(\wt{x}):=\wt{x}\mod p$ 
for $x\in\OO_X,\wt{x}\in\OO_{\wt{X}}$, 
which gives rise to an exact sequence: 
\begin{eqnarray}
H^1(\wt{X},\OO_{\wt{X}}^*)\stackrel{\pi}{\ra} H^1(X,\OO_X^*)
\ra H^2(X,\OO_X). \label{es5}
\end{eqnarray}
We can identify $\iota^*:\Pic(\wt{X})\ra\Pic(X)$ with 
$\pi:H^1(\wt{X},\OO_{\wt{X}}^*)\ra H^1(X,\OO_X^*)$, by using the 
canonical isomorphisms $H^1(X,\OO_X^*)\stackrel{\sim}{\ra}\Pic(X)$ 
and $H^1(\wt{X},\OO_{\wt{X}}^*)\stackrel{\sim}{\ra}\Pic(\wt{X})$. 
Let $D$ be a prime divisor on $X$, and $\LL_D=\OO_X(D)$ 
the associated invertible sheaf on $X$. 
Assume that $\iota^*:\Pic(\wt{X})\ra\Pic(X)$ is surjective. 
Then $\LL_D$ has a lifting $\wt{\LL}_D$ on $\wt{X}$. Furthermore, 
by tensoring the exact sequence (\ref{es1}) with $\wt{\LL}_D$, 
we have a natural exact sequence of sheaves of $\OO_{\wt{X}}$-modules:
\begin{eqnarray}
0\ra \LL_D\stackrel{p}{\ra} \wt{\LL}_D\stackrel{r}{\ra} 
\LL_D\ra 0, \label{es6}
\end{eqnarray}
which gives rise to an exact sequence:
\begin{eqnarray}
H^0(\wt{X},\wt{\LL}_D)\stackrel{\pi_D}{\longrightarrow} H^0(X,\LL_D)\ra 
H^1(X,\LL_D). \label{es7}
\end{eqnarray}

\begin{prop}\label{3.4}
Let $X$ be a smooth projective scheme over $k$, and $\wt{X}$ a lifting 
of $X$ over $W_2(k)$. We use the same notations as above. Then $X$ is 
strongly liftable if and only if the following two conditions hold:

(i) $\pi:H^1(\wt{X},\OO_{\wt{X}}^*)\ra H^1(X,\OO_X^*)$ is surjective;

(ii) $\pi_D:H^0(\wt{X},\wt{\LL}_D)\ra H^0(X,\LL_D)$ is surjective 
for any prime divisor $D$ on $X$.
\end{prop}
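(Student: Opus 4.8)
The plan is to reduce the whole statement to a single dictionary between liftings of an effective divisor and liftings of the associated pair (invertible sheaf, defining section), and then read off both implications. Since $X$ is smooth, a prime divisor $D$ is an effective Cartier divisor, determined by $\LL_D=\OO_X(D)$ together with a regular global section $s_D\in H^0(X,\LL_D)$ with vanishing locus $D$ (well defined up to a unit). I claim that, for the fixed lifting $\wt X$, the pair $(X,D)$ admits a lifting $(\wt X,\wt D)$ in the sense of Definition \ref{2.1} if and only if $\LL_D$ lifts to an invertible sheaf $\wt\LL_D$ on $\wt X$ and $s_D$ lifts to a section $\wt s_D\in H^0(\wt X,\wt\LL_D)$ under the map $r$ of (\ref{es6}). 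One direction is immediate: given $(\wt X,\wt D)$, the sheaf $\OO_{\wt X}(\wt D)$ and its tautological section restrict to $\LL_D$ and $s_D$.

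For the converse I would set $\wt D:=$ the vanishing scheme of $\wt s_D$ and check that it is flat over $W_2(k)$ and restricts to $D$. The efficient way is to compare the contraction-with-section maps on the two short exact sequences $0\ra\LL_D^\vee\stackrel{p}{\ra}\wt\LL_D^\vee\stackrel{r}{\ra}\LL_D^\vee\ra 0$ and (\ref{es1}), with vertical arrows $\cdot s_D,\ \cdot\wt s_D,\ \cdot s_D$. The two outer arrows are injective because $s_D$ is a regular section, so the snake lemma yields $\ker(\cdot\wt s_D)=0$ together with an exact sequence $0\ra\OO_D\stackrel{p}{\ra}\OO_{\wt D}\stackrel{r}{\ra}\OO_D\ra 0$. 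Reducing modulo $p$ gives $\wt D\times_{\Spec W_2(k)}\Spec k=D$, while the injectivity of $\OO_D\stackrel{p}{\ra}\OO_{\wt D}$ is precisely the local flatness criterion over $W_2(k)$, so $\wt D$ is flat. This is essentially \cite[Lemmas 8.13, 8.14]{ev}, and it is the one genuinely computational step.

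Granting the dictionary, the implication (i)${}+{}$(ii)$\,\Rightarrow\,$strong liftability is straightforward: for an arbitrary prime divisor $D$, condition (i) lifts $\LL_D$ to some $\wt\LL_D$, condition (ii) applied to the section $s_D$ lifts $s_D$, and the dictionary produces $(\wt X,\wt D)$; as $D$ was arbitrary, $\wt X$ satisfies Definition \ref{2.3}(ii). Conversely, if $X$ is strongly liftable via $\wt X$, then each prime $D$ lifts, so each $\LL_D=\iota^*\OO_{\wt X}(\wt D)$ lies in the image of $\iota^*$; since the classes of prime divisors generate $\Pic(X)$ and the image is a subgroup, $\iota^*$—equivalently $\pi$—is surjective, which is (i).

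The hard part is condition (ii) in the forward direction: upgrading the lifting of \emph{one} section to the \emph{surjectivity} of $\pi_D$ for the fixed sheaf $\wt\LL_D$. The natural input is the remark following Definition \ref{2.3}, namely that strong liftability lifts \emph{every} effective divisor; any nonzero $t\in H^0(X,\LL_D)$ is the defining section of an effective divisor linearly equivalent to $D$, which therefore lifts and so produces a section lifting $t$—but a priori to the sheaf associated to that divisor's lift rather than to the chosen $\wt\LL_D$. The obstacle is exactly that two liftings of $\LL_D$ differ by a class in $H^1(X,\OO_X)$ (the liftings of a fixed invertible sheaf form a torsor under $H^1(X,\OO_X)$ through (\ref{es4})–(\ref{es5})), and the connecting map $\delta\colon H^0(X,\LL_D)\ra H^1(X,\LL_D)$ of (\ref{es7}) changes by cup product with that class. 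I would therefore have to fix a single compatible choice of $\wt\LL_D$—e.g.\ the one induced by a coherent system of divisor liftings $D\mapsto\OO_{\wt X}(\wt D)$—and show that with this choice $\delta$ annihilates all of $H^0(X,\LL_D)$; controlling this cup-product obstruction is where the real work lies.
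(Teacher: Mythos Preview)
Your dictionary between liftings of $D$ and liftings of the pair $(\LL_D,s_D)$, your sufficiency argument, and your proof of condition (i) in the necessity direction all coincide with the paper's: the paper also lifts the defining section via (i)${}+{}$(ii) to build $\wt D=\divisor_0(\wt s)$, and for (i) it writes an arbitrary $\LL=\OO_X(D_1-D_2)$ with $D_i$ effective and lifts each factor.

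The divergence is condition (ii) in the necessity direction. Here the paper's entire argument is the single sentence ``The condition (ii) is obvious.'' You, by contrast, have put your finger on the genuine subtlety: strong liftability lets you lift each effective $D'\in|\LL_D|$ to some $\wt{D'}$, hence lift its defining section to $H^0\bigl(\wt X,\OO_{\wt X}(\wt{D'})\bigr)$---but nothing forces the various $\OO_{\wt X}(\wt{D'})$ to agree with the fixed $\wt\LL_D$, since by (\ref{es4})--(\ref{es5}) the lifts of $\LL_D$ form a torsor under the image of $H^1(X,\OO_X)$. Your description of how the connecting map of (\ref{es7}) changes by cup product with that class is correct.

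In other words, you have not missed an argument the paper supplies; you have uncovered a point the paper glosses over. Your proposal is thus at least as complete as the paper's on (ii), and more honest about what remains unproved. If you wish to close the gap, note that the statement only demands, for each prime $D$, that \emph{some} lift $\wt\LL_D$ make $\pi_D$ surjective; the task is then to show that a single lift works for every section of $\LL_D$ simultaneously. When $H^1(X,\OO_X)=0$ this is automatic (and this is the only case used later, in Corollary \ref{3.5} and Proposition \ref{3.8}); in general the paper's ``obvious'' does not settle it.
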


\begin{proof}
Sufficiency: For any prime divisor $D$ on $X$, $D$ corresponds to 
a nonzero section $s\in H^0(X,\LL_D)$ with $D=\divisor_0(s)$. 
Since $\pi$ is surjective, we can take a lifting $\wt{\LL}_D$ of 
$\LL_D$ and consider $\pi_D:H^0(\wt{X},\wt{\LL}_D)\ra H^0(X,\LL_D)$. 
Since $\pi_D$ is surjective, there is a nonzero section 
$\wt{s}\in H^0(\wt{X},\wt{\LL}_D)$. Let $\wt{D}=\divisor_0(\wt{s})$. 
Then it is easy to see that $(\wt{X},\wt{D})$ is a lifting of 
$(X,D)$ over $W_2(k)$.

Necessity: Since $X$ is strongly liftable, any effective divisor 
$D\subset X$ has a lifting $\wt{D}\subset\wt{X}$. For any invertible 
sheaf $\LL$ on $X$, write $\LL=\OO_X(D_1-D_2)$ with $D_i$ effective. 
Lift $D_i$ to $\wt{D}_i$ for $i=1,2$. Then the invertible sheaf 
$\wt{\LL}:=\OO_{\wt{X}}(\wt{D}_1-\wt{D}_2)$ lifts $\LL$, which 
proves the condition (i). The condition (ii) is obvious.
\end{proof}

\begin{cor}\label{3.5}
Let $X$ be a smooth projective scheme over $k$, and $\wt{X}$ a lifting 
of $X$ over $W_2(k)$. We use the same notations as above. Then $X$ is 
strongly liftable if the following two conditions hold:

(i) $H^2(X,\OO_X)=0$;

(ii) for any prime divisor $D$ on $X$, either 
$\pi_D:H^0(\wt{X},\wt{\LL}_D)\ra H^0(X,\LL_D)$ is surjective 
or $H^1(X,\OO_X(D))=0$.
\end{cor}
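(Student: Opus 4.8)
The plan is to deduce this directly from Proposition \ref{3.4} by verifying that the two hypotheses (i) and (ii) force the two surjectivity conditions appearing there. The whole argument rests on reading off surjectivity from the two long exact sequences already set up before the statement, namely (\ref{es5}) and (\ref{es7}).

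First I would handle condition (i) of Proposition \ref{3.4}. Recall the exact sequence (\ref{es5}),
\[
H^1(\wt{X},\OO_{\wt{X}}^*)\stackrel{\pi}{\ra} H^1(X,\OO_X^*)\ra H^2(X,\OO_X),
\]
arising from (\ref{es4}). If $H^2(X,\OO_X)=0$, the last term vanishes, so $\pi$ is surjective. Thus hypothesis (i) of the corollary yields condition (i) of Proposition \ref{3.4}. In particular, every invertible sheaf $\LL_D=\OO_X(D)$ admits a lifting $\wt{\LL}_D$ on $\wt{X}$, so that the map $\pi_D$ in (\ref{es7}) is genuinely defined for every prime divisor $D$.

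Next I would verify condition (ii) of Proposition \ref{3.4}, i.e.\ that $\pi_D$ is surjective for every prime divisor $D$. Fix such a $D$. By hypothesis (ii) of the corollary, either $\pi_D$ is already surjective -- in which case there is nothing to prove -- or $H^1(X,\OO_X(D))=0$. In the latter case, since $\LL_D=\OO_X(D)$ we have $H^1(X,\LL_D)=0$, and the exact sequence (\ref{es7}),
\[
H^0(\wt{X},\wt{\LL}_D)\stackrel{\pi_D}{\longrightarrow} H^0(X,\LL_D)\ra H^1(X,\LL_D),
\]
shows at once that $\pi_D$ is surjective. Hence in either case $\pi_D$ is surjective, which is exactly condition (ii) of Proposition \ref{3.4}.

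Having established both conditions of Proposition \ref{3.4}, I would conclude that $X$ is strongly liftable. I do not anticipate any genuine obstacle here: the statement is a formal consequence of Proposition \ref{3.4} together with the long exact sequences (\ref{es5}) and (\ref{es7}). The only point deserving a word of care is that the surjectivity of $\pi$ guaranteed by (i) is precisely what makes the liftings $\wt{\LL}_D$, and hence the maps $\pi_D$, exist in the first place; once this is noted, the disjunction in hypothesis (ii) matches the two ways one can force $\pi_D$ to be onto.
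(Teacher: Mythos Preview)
Your proof is correct and follows essentially the same approach as the paper: both deduce the conditions of Proposition \ref{3.4} directly from the exact sequences (\ref{es5}) and (\ref{es7}). Your version is a bit more detailed, in particular in noting that the surjectivity of $\pi$ is what ensures the liftings $\wt{\LL}_D$ exist so that $\pi_D$ is defined, but the substance is identical.
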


\begin{proof}
From the exact sequence (\ref{es5}), it follows that if 
$H^2(X,\OO_X)=0$ then $\pi:H^1(\wt{X},\OO_{\wt{X}}^*)\ra 
H^1(X,\OO_X^*)$ is surjective. From the exact sequence 
(\ref{es7}), it follows that if $H^1(X,\OO_X(D))=0$ then 
$\pi_D:H^0(\wt{X},\wt{\LL}_D)\ra H^0(X,\LL_D)$ is surjective.
\end{proof}

\begin{lem}\label{3.6}
Let $S$ be a scheme over $k$, and $\wt{S}$ a lifting of $S$ over $W_2(k)$. 
Let $X$ be a smooth $S$-scheme, and $D\subset X$ a divisor which is 
relatively simple normal crossing over $S$. Then there is an obstruction 
$o(X/S,D)\in\ext_{\OO_X}^2(\Omega_{X/S}^1(\log D),\OO_X)=
H^2(X,\TT_{X/S}(-\log D))$ to the liftability of $(X,D)$ over $\wt{S}$, 
i.e.\ $o(X/S,D)=0$ if and only if $(X,D)$ is liftable over $\wt{S}$.
\end{lem}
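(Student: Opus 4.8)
The plan is to treat this as the relative logarithmic analogue of the absolute obstruction theory of Illusie quoted in the proof of Lemma \ref{3.2}, carrying it out by a local-to-global cocycle computation. First I would record the key structural fact: since $\wt{S}$ is a lifting of $S$ over $W_2(k)$, the closed immersion $S\hookrightarrow\wt{S}$ is defined by a square-zero ideal which, exactly as in the exact sequence (\ref{es1}), is canonically identified with $\OO_S$. A lifting of $(X,D)$ over $\wt{S}$ is then a smooth $\wt{S}$-scheme $\wt{X}$ together with a relatively simple normal crossing divisor $\wt{D}$ restricting to $(X,D)$, so the whole problem becomes one of deforming the log pair $(X,D)$ across this square-zero extension.

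Next I would establish the existence of local liftings. Using the relatively simple normal crossing hypothesis, cover $X$ by affine opens $U_\alpha$, each \'{e}tale over $\A^n_S$ via coordinates $x_1,\dots,x_n$ with $D|_{U_\alpha}$ cut out by $x_1\cdots x_\nu=0$. Over such a chart one lifts the coordinates and the equation $x_1\cdots x_\nu=0$ verbatim to $\A^n_{\wt{S}}$, producing a lifting $(\wt{U}_\alpha,\wt{D}|_{\wt{U}_\alpha})$ of the pair; smoothness guarantees that these \'{e}tale charts lift, so local liftings of the log pair always exist.

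The heart of the argument is the comparison of liftings. Two liftings of $(U,D|_U)$ over $\wt{S}$ that agree modulo the square-zero ideal differ by an automorphism which is the identity modulo that ideal and which preserves the divisor; to first order such an automorphism is an $\OO_S$-linear derivation $\OO_X\to\OO_X$ preserving the ideal of $D$, i.e.\ a section of $\TT_{X/S}(-\log D)=\mathcal{H}om(\Omega_{X/S}^1(\log D),\OO_X)$. Fixing the local liftings $\wt{U}_\alpha$ and choosing isomorphisms between their restrictions to the double overlaps $U_\alpha\cap U_\beta$, the failure of these isomorphisms to satisfy the cocycle condition on triple overlaps $U_\alpha\cap U_\beta\cap U_\gamma$ yields a \v{C}ech $2$-cocycle valued in $\TT_{X/S}(-\log D)$, whose cohomology class $o(X/S,D)\in H^2(X,\TT_{X/S}(-\log D))$ one checks to be independent of all choices. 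Since $D$ is relatively simple normal crossing, $\Omega_{X/S}^1(\log D)$ is locally free, so the local-to-global Ext spectral sequence degenerates and furnishes the identification $\ext_{\OO_X}^2(\Omega_{X/S}^1(\log D),\OO_X)=H^2(X,\TT_{X/S}(-\log D))$.

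Finally I would prove the equivalence. If $o(X/S,D)=0$, the $2$-cocycle is a coboundary, so after modifying the gluing isomorphisms by the corresponding $1$-cochain the local liftings patch to a global lifting $(\wt{X},\wt{D})$. Conversely, any global lifting restricts to local liftings whose canonical gluing isomorphisms satisfy the cocycle condition on the nose, forcing $o(X/S,D)=0$. The main obstacle is the middle step: verifying carefully that the infinitesimal automorphisms of a lifting which fix $X$ and preserve the divisor are exactly the logarithmic derivations comprising $\TT_{X/S}(-\log D)$, and that the resulting class is well defined; once this is in place, the rest is the standard gluing formalism.
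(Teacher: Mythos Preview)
Your proposal is correct and follows essentially the same route as the paper. The paper's own proof is much terser: it cites \cite[Proposition 2.12]{il} for the case $D=\emptyset$, cites \cite[Proposition 8.22]{ev} for the torsor property of isomorphisms of liftings under $\Hom_{\OO_X}(\Omega_{X/S}^1(\log D),\OO_X)$---which is exactly your ``heart of the argument'' identifying infinitesimal log automorphisms with sections of $\TT_{X/S}(-\log D)$---and then simply says that the obstruction class follows by the same \v{C}ech-cocycle formalism as in Illusie. You have spelled out in detail what the paper leaves to those references.
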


\begin{proof}
The case when $D=\emptyset$ was verified directly in 
\cite[Proposition 2.12]{il}. For the general case, 
\cite[Proposition 8.22]{ev} just showed that the isomorphisms of 
liftings of $(X,D)$ over $\wt{S}$ form a torsor under 
the group $\Hom_{\OO_X}(\Omega_{X/S}^1(\log D),\OO_X)$, 
hence by a similar argument to that of \cite[Proposition 2.12]{il}, 
we get the required obstruction 
$o(X/S,D)\in\ext_{\OO_X}^2(\Omega_{X/S}^1(\log D),\OO_X)=
H^2(X,\TT_{X/S}(-\log D))$. 
\end{proof}

\begin{lem}\label{3.7}
Let $C$ be a smooth projective curve, and $\EE$ a locally free sheaf 
on $C$ of rank 2. Let $X=\PP(\EE)$, $f:X\ra C$ the natural projection, 
and $E$ a section of $f$. Then $f:(X,E)\ra C$ has a lifting 
$\wt{f}:(\wt{X},\wt{E})\ra\wt{C}$ over $W_2(k)$.
\end{lem}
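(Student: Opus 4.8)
The plan is to reduce the statement to the vanishing of a single obstruction group by applying the logarithmic obstruction theory of Lemma \ref{3.6}. First I would fix, once and for all, a lifting $\wt{C}$ of the curve $C$ over $W_2(k)$; such a lifting exists by Lemma \ref{3.2}. Since $f:X=\PP(\EE)\ra C$ is a smooth projective morphism (a $\PP^1$-bundle) and $E$ is a section of $f$, the divisor $E$ is smooth and meets every fiber $F\cong\PP^1$ transversally in a single reduced point; in suitable relative coordinates it is cut out by one relative coordinate function, so $E$ is relatively simple normal crossing over $C$. Lemma \ref{3.6} (applied with $S=C$, $\wt{S}=\wt{C}$) then produces an obstruction $o(X/C,E)\in H^2(X,\TT_{X/C}(-\log E))$ whose vanishing is equivalent to the existence of a lifting $(\wt{X},\wt{E})$ of $(X,E)$ over $\wt{C}$; composing with $\wt{C}\ra\Spec W_2(k)$ gives the desired lifting $\wt{f}:(\wt{X},\wt{E})\ra\wt{C}$ over $W_2(k)$. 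So everything comes down to showing $H^2(X,\TT_{X/C}(-\log E))=0$.

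Write $\GG=\TT_{X/C}(-\log E)$; it is an invertible sheaf on the surface $X$. To compute its cohomology I would use the Leray spectral sequence $E_2^{p,q}=H^p(C,R^qf_*\GG)\Rightarrow H^{p+q}(X,\GG)$. Because $\dim C=1$ we have $H^p(C,-)=0$ for $p\geq 2$, and because the fibers of $f$ are one-dimensional we have $R^qf_*\GG=0$ for $q\geq 2$; the only term contributing to $H^2$ is therefore $E_2^{1,1}=H^1(C,R^1f_*\GG)$, and since $E_2^{2,0}=E_2^{0,2}=0$ and no nonzero differential enters or leaves the spot $(1,1)$, we get $H^2(X,\GG)\cong H^1(C,R^1f_*\GG)$. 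Hence it suffices to prove that the top direct image $R^1f_*\GG$ vanishes.

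The key step is the fiberwise computation. Since $f$ is smooth, $\TT_{X/C}$ restricts on each fiber $F\cong\PP^1$ to $\TT_F\cong\OO_{\PP^1}(2)$, and because $E$ is a smooth section the log tangent sheaf is identified with $\TT_{X/C}\otimes\OO_X(-E)$; as $E\cdot F=1$ this gives $\GG|_F\cong\OO_{\PP^1}(2)\otimes\OO_{\PP^1}(-1)\cong\OO_{\PP^1}(1)$, whence $H^1(F,\GG|_F)=0$. Since $R^1f_*\GG$ is the top nonzero direct image, cohomology and base change give $(R^1f_*\GG)\otimes k(c)\cong H^1(F_c,\GG|_{F_c})=0$ for every closed point $c\in C$, so $R^1f_*\GG=0$ by Nakayama. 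Therefore $H^2(X,\GG)=0$, the obstruction vanishes, and $(X,E)$ lifts over $\wt{C}$ as required. I expect this fiberwise positivity computation --- showing that the relative log tangent sheaf becomes $\OO_{\PP^1}(1)$ rather than something with higher cohomology --- to be the only real point; the rest is the formal machinery of Lemmas \ref{3.2} and \ref{3.6} together with the dimension and fiber-dimension vanishings feeding the spectral sequence.
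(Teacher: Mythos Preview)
Your argument is correct and follows the same overall strategy as the paper: fix a lifting $\wt{C}$ of $C$, invoke Lemma \ref{3.6}, and reduce everything to the vanishing of $H^2(X,\TT_{X/C}(-\log E))$. The only difference is in how that vanishing is established. The paper applies Serre duality on the surface $X$ to identify the dual with $H^0(X,\omega_{X/C}\otimes\omega_X\otimes\OO_X(E))$ and then observes that this sheaf has negative degree on each fiber, hence no global sections. You instead run the Leray spectral sequence and use cohomology and base change to kill $R^1f_*\GG$ via the fiberwise identification $\GG|_F\cong\OO_{\PP^1}(1)$. Both routes ultimately rest on the same fiberwise degree count (your $+1$ is Serre dual on $\PP^1$ to the paper's $-3$), so neither is really more elementary; yours avoids invoking Serre duality on $X$, while the paper's avoids the base change machinery. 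Either way the proof is complete.
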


\begin{proof}
Let $\wt{C}$ be a lifting of $C$. By Lemma \ref{3.6}, the obstruction 
to the liftability of $(X,E)$ over $\wt{C}$ is $o(X/C,E)\in\ext^2_{\OO_X}
(\Omega^1_{X/C}(\log E),\OO_X)=H^2(X,\TT_{X/C}(-\log E))$. By Serre duality, 
$H^2(X,\TT_{X/C}(-\log E))^\vee=H^0(X,\Omega^1_{X/C}(\log E)\otimes\omega_X)
=H^0(X,\omega_{X/C}\otimes\omega_X\otimes\OO_X(E))$, which vanishes 
by an easy calculation.
\end{proof}

\begin{prop}\label{3.8}
Let $C$ be a smooth projective curve, $\GG$ an invertible sheaf on $C$, 
and $\EE=\OO_C\oplus\GG$. Let $X=\PP(\EE)$, and $f:X\ra C$ the natural 
projection. Then $X$ is strongly liftable over $W_2(k)$.
\end{prop}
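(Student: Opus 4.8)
The plan is to check the two hypotheses of Corollary~\ref{3.5} for a carefully chosen lift $\wt X$ of $X$. First I would build the lift. Since $C$ is strongly liftable by Lemma~\ref{3.2}, I fix a lift $\wt C$ of $C$ and a lift $\wt\GG$ of $\GG$ over $\wt C$ (the precise choice of $\wt\GG$ is pinned down in the last step), and set $\wt X=\PP(\OO_{\wt C}\oplus\wt\GG)$. This is a $\PP^1$-bundle over $\wt C$, hence smooth over $W_2(k)$ and a lift of $X$; alternatively its existence follows from Lemma~\ref{3.7} applied to one of the two tautological sections. The first hypothesis is immediate: because $f\colon X\to C$ has $\PP^1$-fibres we have $f_*\OO_X=\OO_C$ and $R^1f_*\OO_X=0$, so the Leray spectral sequence and $\dim C=1$ give $H^2(X,\OO_X)=H^2(C,\OO_C)=0$.

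Next I would reduce the second hypothesis to a statement on $C$. Any prime divisor $D$ has $\OO_X(D)\cong\OO_X(a)\otimes f^*\LL$ with $a:=D\cdot F\geq 0$ (where $F$ is a fibre) and $\LL\in\Pic(C)$; using the splitting $\EE=\OO_C\oplus\GG$, the identity $f_*\OO_X(a)=S^a\EE=\bigoplus_{i=0}^a\GG^{\otimes i}$, the vanishing $R^1f_*\OO_X(a)=0$, and the projection formula, Leray yields
\[ H^0(X,\OO_X(D))=\bigoplus_{i=0}^a H^0(C,\GG^{\otimes i}\otimes\LL),\qquad H^1(X,\OO_X(D))=\bigoplus_{i=0}^a H^1(C,\GG^{\otimes i}\otimes\LL), \]
and the same computation over $\wt C$ describes $H^0(\wt X,\wt\LL_D)$ for $\wt\LL_D=\OO_{\wt X}(a)\otimes\wt f^*\wt\LL$. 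Hence if every $\GG^{\otimes i}\otimes\LL$ is nonspecial (e.g.\ $\deg(\GG^{\otimes i}\otimes\LL)>2g-2$ for all $i$, where $g$ is the genus of $C$) then $H^1(X,\OO_X(D))=0$ and Corollary~\ref{3.5}(ii) holds for free. For the remaining $D$ I must instead prove that $\pi_D$ is surjective, and by the first displayed formula this splits as the direct sum over $i$ of the restriction maps $H^0(\wt C,\wt\GG^{\otimes i}\otimes\wt\LL)\to H^0(C,\GG^{\otimes i}\otimes\LL)$; so it suffices to lift every global section of each line bundle $\GG^{\otimes i}\otimes\LL$ on $C$.

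The step I expect to be the main obstacle is that all these liftings must be achieved simultaneously with the single sheaf $\wt\GG$ already fixed, the only remaining freedom being the lift $\wt\LL$ of $\LL$ (a torsor under $H^1(C,\OO_C)$ by (\ref{es4})). The obstruction to lifting a section of a line bundle $\MM$ on $C$ along a chosen lift $\wt\MM$ is the connecting homomorphism of the sequence (\ref{es7}) formed on $C$, which is cup product with a class $c(\wt\MM)\in H^1(C,\OO_C)$; since the lifts of $\MM$ form an $H^1(C,\OO_C)$-torsor and $c$ is an affine bijection, there is a distinguished lift with $c(\wt\MM)=0$, for which every global section lifts. The decisive feature is that $c$ is additive under tensor product: if I choose $\wt\GG$ to be the distinguished lift of $\GG$ and, for each $D$, take $\wt\LL$ to be the distinguished lift of $\LL$, then $c(\wt\GG^{\otimes i}\otimes\wt\LL)=i\,c(\wt\GG)+c(\wt\LL)=0$ for all $i$, so all homogeneous components of the defining section lift at once and $\pi_D$ is surjective. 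Establishing that $c$ is well defined, additive, and computes the connecting map of (\ref{es7}) is the only delicate ingredient; granting it, Corollary~\ref{3.5} shows that $X$ is strongly liftable.
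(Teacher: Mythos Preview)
Your approach coincides with the paper's: construct $\wt X=\PP(\OO_{\wt C}\oplus\wt\GG)$, check $H^2(X,\OO_X)=0$ via Leray, and reduce the surjectivity of $\pi_D$ to that of each component map $\pi_D^i:H^0(\wt C,\wt\GG^{\otimes i}\otimes\wt\LL)\to H^0(C,\GG^{\otimes i}\otimes\LL)$ using the projection formula and the splitting $S^a\EE=\bigoplus_i\GG^{\otimes i}$. The only difference is in the last step. The paper dispatches it in a single sentence, asserting that all the $\pi_D^i$ are surjective ``since $C$ is strongly liftable'' (Lemma~\ref{3.2}); it does not address the compatibility issue you raise between the already-fixed $\wt\GG$ and a single choice of $\wt\LL$ serving all $i$ at once. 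Your obstruction-class analysis is therefore more scrupulous than the paper's own treatment of this point. One caveat on your formulation: the set of lifts of a given line bundle is only an $H^1(C,\OO_C)$-torsor with no canonical base point, so declaring ``$c(\wt\MM)=0$'' is not intrinsically meaningful; what is unambiguous (and sufficient for your argument) is that changing a lift by $\alpha\in H^1(C,\OO_C)$ shifts the connecting map of (\ref{es7}) by cup product with $\alpha$, and that tensor product of lifts corresponds to addition of these shifts, so once you fix $\wt\GG$ you can absorb all obstructions into the choice of $\wt\LL$.
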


\begin{proof}
Let $E$ be a section of $f$ with $\OO_X(E)\cong\OO_X(1)$. 
By Lemma \ref{3.7}, we can take and fix such a lifting 
$\wt{f}:(\wt{X},\wt{E})\ra\wt{C}$ of $f:(X,E)\ra C$. 
Since $H^2(X,\OO_X)=0$, by Corollary \ref{3.5}, to prove 
the strong liftability of $X$, it suffices to show that 
for any irreducible curve $D$ on $X$, 
$\pi_D:H^0(\wt{X},\wt{\LL}_D)\ra H^0(X,\LL_D)$ is surjective, 
where $\LL_D=\OO_X(D)$.

Write $D\sim aE+f^*H$, where $a\geq 0$ and $H$ is a divisor on $C$. 
Let $\wt{H}$ be a divisor on $\wt{C}$ lifting $H$, $\HH=\OO_X(H)$ and 
$\wt{\HH}=\OO_{\wt{X}}(\wt{H})$. Then $\LL_D=\OO_X(D)$ has a lifting 
$\wt{\LL}_D=\OO_{\wt{X}}(a\wt{E}+\wt{f}^*\wt{H})$. Note that
\begin{eqnarray}
H^0(X,\LL_D)=H^0(C,f_*\OO_X(aE+f^*H))=H^0(C,S^a(\EE)\otimes\HH)
=\oplus_{i=0}^a H^0(C,\GG^i\otimes\HH), \nonumber \\
H^0(\wt{X},\wt{\LL}_D)=H^0(\wt{C},\wt{f}_*\OO_{\wt{X}}(a\wt{E}+
\wt{f}^*\wt{H}))=H^0(\wt{C},S^a(\wt{\EE})\otimes\wt{\HH})
=\oplus_{i=0}^a H^0(\wt{C},\wt{\GG}^i\otimes\wt{\HH}). \nonumber
\end{eqnarray}
Therefore, $\pi_D:H^0(\wt{X},\wt{\LL}_D)\ra H^0(X,\LL_D)$ factorizes into 
$\pi_D^i:H^0(\wt{C},\wt{\GG}^i\otimes\wt{\HH})\ra H^0(C,\GG^i\otimes\HH)$. 
Since $C$ is strongly liftable by Lemma \ref{3.2}, 
$\pi_D^i$ are surjective for all $i$, hence $\pi_D$ is surjective.
\end{proof}

\begin{rem}\label{3.9}
We can generalize Proposition \ref{3.8} as follows by an analogous 
proof. Let $Y$ be a strongly liftable smooth projective variety, 
$\EE$ a decomposable locally free sheaf on $Y$, and $X=\PP(\EE)$. 
Then $X$ is strongly liftable over $W_2(k)$.
\end{rem}

\begin{rem}\label{3.10}
Let $C$ be a smooth projective curve, $\EE$ a locally free sheaf 
on $C$ of rank 2, and $X=\PP(\EE)$. If $\EE$ is indecomposable, 
then $X$ is not necessarily strongly liftable. Such an example 
has been given in \cite[Theorem 3.1]{xie07}. More precisely, 
if $C$ is a Tango curve, then there is an indecomposable 
locally free sheaf $\EE$ on $C$ of rank 2, such that the pull-back 
$F^*\EE$ by the Frobenius $F:C\ra C$ is decomposable, which gives 
rise to an irreducible curve $C'$ on $X=\PP(\EE)$, such that $(X,C')$ 
cannot be lifted over $W_2(k)$. It is also an example of liftable 
but not strongly liftable scheme such that the condition (ii) in 
Proposition \ref{3.4} is not satisfied.
\end{rem}

\begin{rem}\label{3.11}
An abelian variety is another example of liftable but not strongly 
liftable scheme such that the condition (i) in Proposition \ref{3.4} 
is not satisfied. We proceed the following argument provided by 
Oort. Let $(X,\LL)$ be a polarized abelian variety of dimension $g$. 
Then the universal deformation space of $X$, say for $\Spec(R)$, 
is smooth over $W(k)$ of relative dimension $g^2$, where $W(k)$ is 
the ring of Witt vectors of $k$, and the universal deformation space 
of $(X,\LL)$, say for $\Spec(R/I)$, is smooth over $W(k)$ of relative 
dimension $<g^2$ (see \cite{oo71,oo79} for more details). Therefore, 
we can take a ring homomorphism $\varphi:R\ra W_2(k)$ which induces the 
identity map on residue closed fields and satisfies $\varphi(I)\neq 0$. 
The existence of such $\varphi$ means that $X$ can be lifted over $W_2(k)$, 
while $(X,\LL)$ cannot be lifted over $W_2(k)$ at the same time.
\end{rem}

\section{Proof of the main results}\label{S4}

\begin{proof}[Proof of Theorem \ref{1.3}]
(i) follows from Lemmas \ref{3.1} and \ref{3.2}. 
(ii) follows from Lemma \ref{3.3}. 
Both (iii) and (iv) follow from Propositions \ref{3.8} and \ref{2.6} 
since any locally free sheaf on $\PP^1$ is decomposable.
\end{proof}

The following vanishing result \cite[Corollary 2.2.5]{kk} is useful, 
which holds in arbitrary characteristic.

\begin{lem}\label{4.1}
Let $h: Y\ra X$ be a proper birational morphism between normal surfaces 
with $Y$ smooth and with exceptional locus $E=\cup_{i=1}^s E_i$. 
Let $L$ be an integral divisor on $Y$, $0\leq b_1,\cdots,b_s<1$ 
rational numbers, and $N$ an $h$-nef $\Q$-divisor on $Y$. Assume 
$ L\equiv K_Y+\sum_{i=1}^s b_iE_i+N$. Then $R^1h_*\OO_Y(L)=0$ holds.
\end{lem}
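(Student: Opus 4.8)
The plan is to prove this by the classical formal-functions method, which works uniformly in every characteristic precisely because it never invokes Hodge theory or a Kodaira-type vanishing: the only analytic inputs are Serre duality on curves and the negative-definiteness of the exceptional intersection form. First I would note that $R^1h_*\OO_Y(L)$ is a coherent sheaf on $X$ whose support is contained in the finite set $h(E)$, so it suffices to show that its stalk vanishes at each such point $x$; replacing $X$ by the (complete) local ring at $x$, I may assume $h^{-1}(x)=E$. By the theorem on formal functions the completed stalk is $\varprojlim_Z H^1(Z,\OO_Y(L)|_Z)$, the limit running over the effective divisors $Z>0$ supported on $E$, which are cofinal among the infinitesimal neighborhoods of the fibre. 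Hence the whole problem reduces to showing $H^1(Z,\OO_Y(L)|_Z)=0$ for every such $Z$.

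This vanishing I would prove by induction on $\deg Z=\sum_i n_i$, where $Z=\sum_i n_iE_i$. Picking a component $E_{i_0}$ with $n_{i_0}\geq 1$ and writing $Z=Z'+E_{i_0}$ with $Z'\geq 0$, the restriction-of-structure-sheaf sequence tensored with $\OO_Y(L)$ reads $0\ra\OO_Y(L-Z')|_{E_{i_0}}\ra\OO_Y(L)|_Z\ra\OO_Y(L)|_{Z'}\ra 0$. The term $H^1(Z',\OO_Y(L)|_{Z'})$ vanishes by the inductive hypothesis (and is absent when $Z'=0$), so it remains to kill $H^1(E_{i_0},\OO_Y(L-Z')|_{E_{i_0}})$. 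Since $E_{i_0}$ is a prime divisor on the smooth surface $Y$, it is an integral projective Gorenstein curve, and Serre duality gives this vanishing as soon as $\deg_{E_{i_0}}\OO_Y(L-Z')>2p_a(E_{i_0})-2=(K_Y+E_{i_0})\cdot E_{i_0}$. Rewriting, this degree condition is exactly $(L-K_Y-Z)\cdot E_{i_0}>0$; using $L-K_Y\equiv\Delta+N$ with $\Delta:=\sum_i b_iE_i$, together with the $h$-nefness of $N$ (so $N\cdot E_{i_0}\geq 0$ as $E_{i_0}$ is $h$-exceptional), it suffices to find a component $E_{i_0}$ of $Z$ with $(Z-\Delta)\cdot E_{i_0}<0$.

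The \emph{crux}, and the step I expect to be the main obstacle, is this numerical claim: for every effective $Z>0$ supported on $E$ there is a component $E_{i_0}$ of $Z$ with $(Z-\Delta)\cdot E_{i_0}<0$. Here the hypotheses $0\leq b_i<1$ and the negative-definiteness of the matrix $(E_i\cdot E_j)$ (standard for the exceptional locus of a birational morphism of normal surfaces) are both essential. I would split $V:=Z-\Delta=\sum_i(n_i-b_i)E_i$ into the part $V_P=\sum_{n_i\geq 1}(n_i-b_i)E_i$ supported on the components of $Z$, whose coefficients are all strictly positive, and the remainder $-\sum_{n_i=0}b_iE_i$, which is anti-effective. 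For any component $E_{i_0}$ of $Z$ the latter contributes $\leq 0$ to the intersection with $E_{i_0}$, since distinct prime divisors meet non-negatively; hence $V\cdot E_{i_0}\leq V_P\cdot E_{i_0}$ for all such $i_0$. As $V_P$ is a nonzero positive combination of the $E_i$, negative-definiteness forces $V_P^2<0$, so $\sum_{n_i\geq 1}(n_i-b_i)(V_P\cdot E_i)<0$ and at least one component $E_{i_0}$ of $Z$ satisfies $V_P\cdot E_{i_0}<0$. This gives $(Z-\Delta)\cdot E_{i_0}<0$, closes the induction, and finishes the proof. The one point requiring care is that, because the $E_i$ need not be smooth or rational, the curve-level vanishing must be taken in the Gorenstein form of Serre duality rather than through $\PP^1$-computations.
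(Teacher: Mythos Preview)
Your argument is correct and is essentially the standard proof of this result (the one found, e.g., in the Koll\'ar--Kov\'acs reference the paper cites): formal functions, induction on the degree of the thickening $Z$, Serre duality on the integral Gorenstein curve $E_{i_0}$, and the numerical step that negative-definiteness of the exceptional intersection form lets one always peel off a component with $(Z-\Delta)\cdot E_{i_0}<0$. The paper itself does not give a proof of Lemma~4.1 at all; it simply quotes \cite[Corollary 2.2.5]{kk} and uses the statement. So there is nothing to compare against---you have supplied a complete, correct proof where the paper only gives a citation.

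One very small remark on presentation: your sentence ``replacing $X$ by the (complete) local ring at $x$'' is slightly imprecise, since one really works on the formal fibre via the theorem on formal functions rather than by literally base-changing to the completion; but the subsequent argument makes clear what you mean, and the mathematics is fine.
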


We shall prove Theorem \ref{1.4} in the following explicit form.

\begin{thm}\label{4.2}
Let $X$ be a normal projective surface, 
$D$ a $\Q$-Cartier Weil divisor on $X$, and 
$B$ an effective $\Q$-divisor such that $(X,B)$ is KLT 
and that $D-(K_X+B)$ is ample. If $X$ is birational to a strongly 
liftable smooth projective surface $Z$, then $H^1(X,D)=0$ holds.
\end{thm}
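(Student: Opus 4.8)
The plan is to reduce the statement on the singular surface $X$ to the Kawamata-Viehweg vanishing on a strongly liftable smooth surface, where Theorem \ref{1.2} applies. First I would choose a common resolution of the birational map between $X$ and $Z$. Since $X$ is a normal projective surface and $Z$ is smooth, I would take a smooth projective surface $Y$ together with proper birational morphisms $g\colon Y\ra X$ and $h\colon Y\ra Z$, which exist because birational maps between projective surfaces are resolved by a sequence of blow-ups at closed points. The crucial observation is that, by Proposition \ref{2.6}, strong liftability is preserved under blowing up closed points; since $Z$ is strongly liftable and $h$ is a composition of such blow-ups, the surface $Y$ is \emph{itself} strongly liftable. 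This is what lets me work on $Y$ rather than only on $Z$.

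Next I would transport the vanishing problem from $X$ to $Y$. Writing $g\colon Y\ra X$ for the resolution of $X$, I would pull back $D$ and $K_X+B$ to $Y$, producing a $\Q$-divisor on $Y$ of the form $L\equiv K_Y + (\text{fractional boundary supported on the exceptional and strict-transform locus}) + (\text{ample part})$. The standard Kawamata-Viehweg package on a smooth liftable surface, obtained by feeding the ample $\Q$-divisor $H:=g^*(D-(K_X+B))$ into Theorem \ref{1.2} with $D$ an SNC divisor containing $\Supp(\langle H\rangle)$ (which exists after a further sequence of blow-ups, again preserving strong liftability by Proposition \ref{2.6}), yields $H^1(Y, K_Y+\ulcorner H\urcorner)=0$ once $\inf(2,p)=2$, i.e. $p\geq 2$. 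Here I must be careful that the integral divisor $K_Y+\ulcorner H\urcorner$ on $Y$ agrees, up to the round-up conventions and the KLT hypothesis on $(X,B)$, with $g^*D$ plus an effective exceptional divisor, so that its pushforward recovers $\OO_X(D)$.

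The bridge between $H^1(Y,-)$ and $H^1(X,D)$ is the Leray spectral sequence for $g$. I would use the local vanishing $R^1g_*\OO_Y(L)=0$ supplied by Lemma \ref{4.1}: with $L:=K_Y+\ulcorner H\urcorner$ and $N$ the $g$-nef (indeed $g$-pulled-back, hence $g$-numerically trivial) part, the hypothesis $L\equiv K_Y+\sum b_iE_i+N$ of Lemma \ref{4.1} is exactly the KLT condition translated to $Y$, giving $0\leq b_i<1$. Then the Leray sequence degenerates to an isomorphism $H^1(X,g_*\OO_Y(L))\cong H^1(Y,\OO_Y(L))=0$, and the projection-formula / KLT computation identifies $g_*\OO_Y(L)$ with $\OO_X(D)$.

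The main obstacle, as I see it, is the bookkeeping in the second step: one must verify that after resolving $X$ and making $\Supp(\langle H\rangle)$ simple normal crossing, the round-up $\ulcorner H\urcorner$ combines with $K_Y$ to give precisely $g^*D$ modulo an effective $g$-exceptional divisor whose coefficients are controlled by the KLT discrepancies of $(X,B)$ — this is where the fractional coefficients $b_i<1$ needed for Lemma \ref{4.1} must be extracted, and where the ampleness of $D-(K_X+B)$ must be promoted to the nefness of the fractional-part data after round-up. Ensuring the numerical equivalence $L\equiv K_Y+\sum b_iE_i+N$ holds on the nose, with the boundary supported only on exceptional curves and with each $b_i$ strictly less than $1$, is the delicate point; the liftability input (Theorem \ref{1.2} via strong liftability of $Y$) and the local vanishing (Lemma \ref{4.1}) are then applied more or less mechanically.
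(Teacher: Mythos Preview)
Your outline matches the paper's proof essentially step for step: pass to a common smooth model $Y$ dominating both $X$ and $Z$, use Proposition~\ref{2.6} to transfer strong liftability from $Z$ to $Y$, apply Theorem~\ref{1.2} on $Y$, and descend via Lemma~\ref{4.1} and Leray.

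There is one concrete slip worth fixing rather than leaving in the ``bookkeeping'' bin. You write that $H:=g^*(D-(K_X+B))$ is ample and feed it into Theorem~\ref{1.2}; but the pullback of an ample divisor under a birational morphism with exceptional locus is only nef and big, not ample, so Theorem~\ref{1.2} does not apply to this $H$ directly. The paper's remedy is the standard perturbation: choose $0<\delta_i\ll 1$ and replace $H$ by $g^*(D-(K_X+B))-\sum_i\delta_iE_i$, which \emph{is} ample. The same $\delta_i$ are chosen small enough that adding $\sum_i\delta_iE_i$ to the fractional boundary $g_*^{-1}B+\{g^*D+\sum_ia_iE_i\}$ keeps all coefficients in $[0,1)$; this simultaneously arranges the hypotheses of Theorem~\ref{1.2} and of Lemma~\ref{4.1}. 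With that adjustment your argument goes through exactly as in the paper, including the identification $g_*\OO_Y(D_Y)=\OO_X(D)$ with $D_Y=\ulcorner g^*D+\sum_ia_iE_i\urcorner$.
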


\begin{proof}
Take a log resolution $h: Y\ra X$ such that the following three 
conditions hold:
\[
\xymatrix{
  & \ar[dl]_h Y \ar[dr]^{\pi} & \\
X \ar@{-->}[rr] & & Z
}
\]

(i) $Y$ is a smooth projective surface over $k$, and we 
can write $K_Y+h_*^{-1}B \equiv h^*(K_X+B)+\sum_ia_iE_i$, 
where $E_i$ are the exceptional curves of $h$ and $a_i>-1$ 
for all $i$;

(ii) $G=\Supp(h_*^{-1}B)\cup\Exc(h)$ is simple normal crossing;

(iii) $\pi:Y\ra Z$ is a birational morphism.

Let $D_Y=\ulcorner h^*D+\sum_ia_iE_i\urcorner$. Since 
$\ulcorner \sum_ia_iE_i\urcorner\geq 0$ is supported by $\Exc(h)$, 
we have $h_*\OO_Y(D_Y)=\OO_X(D)$ by the projection formula. 
Since $\{h^*D+\sum_ia_iE_i\}$ is supported by $\Exc(h)$, 
we can take $0<\delta_i\ll 1$ such that

(i) $\bigl{[}h_*^{-1}B+\{h^*D+\sum_ia_iE_i\}+
\sum_i\delta_iE_i\bigr{]}=0$.

(ii) $D_Y-(K_Y+h_*^{-1}B+\{h^*D+\sum_ia_iE_i\}+
\sum_i\delta_iE_i) \equiv h^*(D-(K_X+B))-\sum_i\delta_iE_i$ is ample.

Let $B_Y=h_*^{-1}B+\{h^*D+\sum_ia_iE_i\}+\sum_i\delta_iE_i$. 
Then $H_Y=D_Y-(K_Y+B_Y)$ is ample, $\Supp(\langle H_Y\rangle)=\Supp(B_Y)$ 
is simple normal crossing, and $K_Y+\ulcorner H_Y\urcorner=D_Y$. Note that
\[ D_Y \equiv K_Y+\{h^*D+\sum_ia_iE_i\}+h^*(D-(K_X+B))
+h_*^{-1}B. \]
By Lemma \ref{4.1}, we have $R^1h_*\OO_Y(D_Y)=0$, hence 
$H^1(Y,D_Y)=H^1(X,h_*\OO_Y(D_Y))=H^1(X,D)$. 

Since $\pi:Y\ra Z$ is a birational morphism between smooth projective 
surfaces, it is a composition of blow-ups \cite[Corollary V.5.4]{ha}. 
Since $Z$ is strongly liftable over $W_2(k)$, so is $Y$ by Proposition 
\ref{2.6}, hence $(Y,G)$ admits a lifting over $W_2(k)$. 
Since $G$ contains $\Supp(\langle H_Y\rangle)$, we have 
$H^1(X,D)=H^1(Y,D_Y)=H^1(Y,K_Y+\ulcorner H_Y\urcorner)=0$ 
by Theorem \ref{1.2}.
\end{proof}

\begin{rem}\label{4.3}
It seems impossible to generalize Theorem \ref{4.2} (i.e.\ 
Theorem \ref{1.4}) to the higher dimensional cases because of 
the following reasons: 
First of all, it is unknown whether the strong liftability is 
stable under the blow-ups along higher dimensional centers, 
while the center in Proposition \ref{2.6} is of dimension zero. 
Secondly, the analogue of Lemma \ref{4.1} fails definitely on 
higher dimensional varieties. 
Finally, the higher dimensional minimal model program is more 
involved, and extremal contractions and flips are more general 
and complicated than blow-ups or blow-downs.
\end{rem}

\begin{proof}[Proof of Corollary \ref{1.6}]
It follows from Lemma \ref{3.3}, Proposition \ref{2.6} 
and Theorem \ref{1.2}.
\end{proof}

\small

\textsc{School of Mathematical Sciences, Fudan University, 
Shanghai 200433, China}

\textit{E-mail address}: \texttt{qhxie@fudan.edu.cn}

\end{document}